\newcommand{\bbR}{{\mathbb{R}}}
\newcommand{\bfI}{\boldsymbol I}
\newcommand{\bfV}{\boldsymbol V}
\newcommand{\mcK}{\mathcal{K}}
\newcommand{\mcF}{\mathcal{F}}
\newcommand{\mcN}{\mathcal{N}}
\newcommand{\mcA}{\mathcal{A}}
\newcommand{\mcU}{\mathcal{U}}
\newcommand{\tn}{|\mspace{-1mu}|\mspace{-1mu}|}
\newcommand{\Gammah}{{\Gamma_h}}
\newcommand{\nablag}{{\nabla_\Gamma}}
\newcommand{\curvten}{\mathcal{H}_\Gamma}
\numberwithin{equation}{section}
\newtheorem{lem}{Lemma}[section]
\newtheorem{thm}{Theorem}[section]
\newenvironment{proof}{\noindent \newline {\bf Proof.}}
{\hfill \mbox{\fbox{} } \newline}
\date{}
\begin{document}
\title{\bf Cut Finite Element Methods for Coupled Bulk-Surface Problems}
\author{Erik Burman
\footnote{Department of Mathematics, University College London, London, UK-WC1E  6BT, United Kingdom} 
\footnote{Supported by EPSRC, UK, Grant Nr. EP/J002313/1.}
\mbox{ }
 Peter Hansbo
\footnote{Department of Mechanical Engineering, J\"onk\"oping University,
SE-55111 J\"onk\"oping, Sweden.} 
\footnote{Supported by the Swedish Foundation for Strategic Research Grant Nr.\ AM13-0029 and the Swedish Research Council Grant Nr.\ 2011-4992.}
\mbox{ }
Mats G.\ Larson
\footnote{Department of Mathematics and Mathematical Statistics, Ume{\aa} University, SE-90187 Ume{\aa}, Sweden} 
\footnote{Supported by the Swedish Foundation for Strategic Research Grant Nr.\ AM13-0029 and the Swedish Research Council Grant Nr.\ 2013-4708.}
\mbox{ }
Sara Zahedi
\footnote{Department of Mathematics, KTH Royal Institute of Technology,
SE-100 44 Stockholm, Sweden}
}
\numberwithin{equation}{section} \maketitle
 \begin{abstract}
We develop a cut finite element method for a second order elliptic 
coupled bulk-surface model problem. We prove a priori estimates for the 
energy and $L^2$ norms of the error. Using stabilization terms we show that 
the resulting algebraic system of equations has a similar condition
number as a standard fitted finite element method. Finally, we
present a numerical example illustrating the accuracy and the
robustness of our approach. 
\end{abstract}

\section{Introduction}
Problems involving phenomena that take place both on surfaces (or
interfaces) and in bulk domains occur in a variety of applications in
fluid dynamics and biological applications. An example is
given by the modeling of soluble surfactants.  Surfactants are
important because of their ability to reduce the surface
tension. Examples of applications where the effects of surfactants are important in the modelling include detergents, oil recovery, and the treatment of lung diseases.
A soluble surfactant is dissolved in the bulk fluid but also exists in adsorbed form on the interface.  A computational challenge is then to properly account for the exchange between these two surfactant forms.
 The coupling between the dissolved form in the bulk and the adsorbed
 form on the interface involves computations of the gradient of the
 bulk surfactant concentration on a moving interface that may undergo
 topological changes, see e.g.\cite{BooSie10}. In this context computational 
 methods that allow the interface to be arbitrarily located with respect 
 to a fixed background mesh are of great interest.

We consider a basic model problem of this nature that involves two coupled elliptic 
problems one in the bulk and one on the boundary of the bulk domain. The 
coupling term is defined in such a way that the overall bilinear form 
in the corresponding weak statement is coercive. A finite element
method was proposed and analyzed for a  similar model problem
in~\cite{EllRan12}. See also \cite{DziEll13}, and the references 
therein for background on finite element methods for partial differential 
equations on surfaces. In~\cite{EllRan12} a polyhedral approximation of the
bulk domain was used and its piecewise polynomial boundary faces
served as
approximation of the surface. In this contribution we develop a method
that is unfitted, that is, the surface is allowed to cut through a
fixed background mesh in an arbitrary way.  Such a finite element
method was proposed in~\cite{ORG09} for the Laplace--Beltrami
operator. 
A general framework for this type of computational methods using
finite element methods on cut meshes, co called CutFEM methods was
recently discussed in \cite{BuClHaLaMa14}. The CutFEM approach is 
convenient since the same finite element space defined on a background 
grid can be used for solving both the partial differential equation 
in the bulk region and on the surface. However, a drawback of this 
type of methods is that the stiffness matrix may become arbitrarily 
ill conditioned depending on the position of the surface in the background 
mesh. In the case of the Laplace--Beltrami operator this ill conditioning 
has been addressed in~\cite{OlsReu10} and~\cite{BurHanLar13}. For results
on the stability of the bulk equation on cut meshes see \cite{BurHan12, 
HanLar13, MasLar14}.

We use continuous piecewise linear elements defined on the 
background mesh to solve both the problem in the bulk domain and the problem 
on the surface. To stabilize the method we add gradient jump penalty terms as in~\cite{BurHan12,BurHanLar13} that ensure that the resulting algebraic system of equations has optimal 
condition number. We also consider the approximation of the domain and prove 
a priori error estimates in both the $H^1$-- and $L^2$--norms, taking both the approximation of the domain and of the solution into account.

The remainder of the paper is outlined as follows: In Section 2 we 
introduce the model problem and state the weak form, in Section 3 we 
introduce a discrete approximation of the domain, in Section 4 we prove 
a priori estimates for the energy and $L^2$ norm of the error, in Section 
5 we prove an estimate of the condition number, and finally 
in Section 6 we present a numerical example.

\section{The Continuous Coupled Bulk-Surface Problem}

\subsection{Strong Form}

Let $\Omega$ be a domain in $\bbR^3$ with smooth boundary $\Gamma$ 
and exterior unit normal $n$. We consider the following problem: find 
$u_B: \Omega \rightarrow \bbR$ and $u_S: \Gamma \rightarrow {\bbR}$ 
such that
\begin{alignat}{3}\label{eq:LBa}
-\nabla\cdot (k_B \nabla u_B) &= f_B \quad &\text{in $\Omega$}  
\\ \label{eq:LBb}
-n\cdot k_B \nabla u_B &=   {b_B} u_B - {b_S} u_S  \quad &  \text{on $\Gamma$}
\\ \label{eq:LBc}
-\nabla_\Gamma \cdot(k_S \nabla_\Gamma u_S) &= f_S - n \cdot k_B \nabla u_B 
\quad & \text{on $\Gamma$}
\end{alignat}
Here $\nabla$ is the $\bbR^3$ gradient and $\nabla_\Gamma$ is the 
tangent gradient associated with $\Gamma$ defined by
\begin{equation}
\nabla_\Gamma = P_\Gamma \nabla 
\end{equation}
with $P_\Gamma = P_\Gamma(x)$ the projection of $\bbR^3$ onto the tangent 
plane of $\Gamma$ at $x\in\Gamma$, defined by
\begin{equation}
P_\Gamma = I -n \otimes n
\end{equation}
Further, 
${b_B}$, ${b_S}$, $k_B$, and $k_S$ are positive constants, and
$f_B:\Omega \rightarrow \bbR$ and $f_S:\Gamma \rightarrow \bbR$ are given functions. 
{As mentioned above, this problem serves as a basic model for the concentration of 
surfactants interacting with a bulk concentration; it also models other processes, e.g., proton transport via a membrane surface \cite{GeMeSt02}.}

\subsection{Weak Form}
Multiplying (\ref{eq:LBa}) by $v_B \in H^1(\Omega)$, integrating by parts, 
and using the boundary condition (\ref{eq:LBb}), we obtain
\begin{align}
(f_B,v_B)_\Omega &= (k_B \nabla u_B, \nabla v_B )_\Omega 
- (n \cdot k_B \nabla u_B, v_B )_{\Gamma}
\\
&= (k_B \nabla u_B, \nabla v_B )_\Omega 
+ ({b_B} u_B - {b_S} u_S, v_B)_{\Gamma}
\end{align}
and thus we have the weak statement
\begin{equation}
(k_B \nabla u_B, \nabla v_B )_\Omega 
+ ({b_B} u_B - {b_S} u_S, v_B )_{\Gamma}  = (f_B,v_B)_\Omega
\quad \forall v_B \in H^1(\Omega)
\end{equation}
Next multiplying (\ref{eq:LBc}) by $v_S \in H^1(\Gamma)$, integrating by parts, 
and again using (\ref{eq:LBb}) we obtain 
\begin{align}
(k_S \nabla_\Gamma u_S, \nabla_\Gamma v_S )_\Gamma 
&= (f_S - n \cdot k_S \nabla u_B, v_S )_\Gamma 
\\
&= ( f_S + ({b_B} u_B - {b_S} u_S), v_S)_{\Gamma}
\end{align}
and thus
\begin{equation}
(k_S \nabla u_S, \nabla v_S )_\Gamma - ({b_B} u_B - {b_S} u_S, v_S)_{\Gamma}
= (f_S,v_S)_\Gamma \quad \forall v_S \in H^1(\Gamma)
\end{equation}
We note that the solution to this system of equations is uniquely determined 
up to a pair of constant functions $(c_B,c_S)$ such that 
${b_B} c_B - {b_S} c_S=0$. To obtain a unique solution we here choose to enforce $\int_\Gamma u_S=0$.

Introducing the function spaces
\begin{equation}
V_B = H^1(\Omega) , \quad V_S = H^1(\Gamma)/\langle 1_\Gamma\rangle,\quad W = V_B \times V_S
\end{equation}
and choosing the test functions ${b_B} v_B$ and ${b_S} v_S$ we 
get the variational problem: find $u = (u_B,u_S) \in W$ such that
\begin{equation}
a(u,v) = l(v) \quad \forall v \in W
\end{equation}
Here
\begin{equation}
a(u,v) = a_B( u_B , v_B)  + a_S(u_S,v_S) + a_{BS}(u,v)
\end{equation}
with
\begin{equation}\label{eq:contformsa}
\begin{cases}
a_B( u_B , v_B)={b_B} ( k_B\nabla u_B , \nabla v_B )_\Omega 
\\
 a_S(u_S,v_S) = {b_S} (k_S \nabla_S  u_S , \nabla_S  v_S )_\Gamma
 \\
 a_{BS}(u,v) = ({b_B} u_B -{b_S} u_S,{b_B} v_B  - {b_S} v_S )_\Gamma
 = (b \cdot u,b \cdot v )_\Gamma
\end{cases}
\end{equation}
where we also introduced the notation $b=(b_B,-b_S)$ and
\begin{equation}\label{eq:contformsl}
l(v) = l_B(v_B) + l_S(v_S) = b_B(f_B,v_B)_\Omega + b_S(f_S,v_S)_\Gamma
\end{equation}
Introducing the energy norm 
\begin{equation}
\tn u \tn^2 = a(u,u)
\end{equation}
we directly obtain coercivity and continuity of the bilinear form 
$a(\cdot,\cdot)$ and continuity of $l(\cdot)$. Using Lax-Milgram there is 
a unique solution in $W$. If $\Gamma$ is $C^3$ we additionally have the elliptic 
regularity estimate
\begin{equation}
\| u_B \|_{H^2(\Omega)} + \| u_S \|_{H^2(\Gamma)} \lesssim \| f_B \|_{L^2(\Omega)} 
+ \| f_S \|_{L^2(\Gamma)} 
\end{equation}
see \cite{EllRan12} for details. Here and below $\lesssim$ denotes 
less or equal up to a constant, $\| \cdot \|_{H^s(\omega)}$ denotes the 
standard Sobolev norm $H^s(\omega)$ norm on the set $\omega$, and 
$\|\cdot \|_{L^p(\omega)}$ denotes 
the $L^p(\omega)$ norm. 

\section{The Finite Element Method}

\subsection{Approximation of the Domain}
\begin{figure}
\begin{center} 
\includegraphics[width=0.7\textwidth]{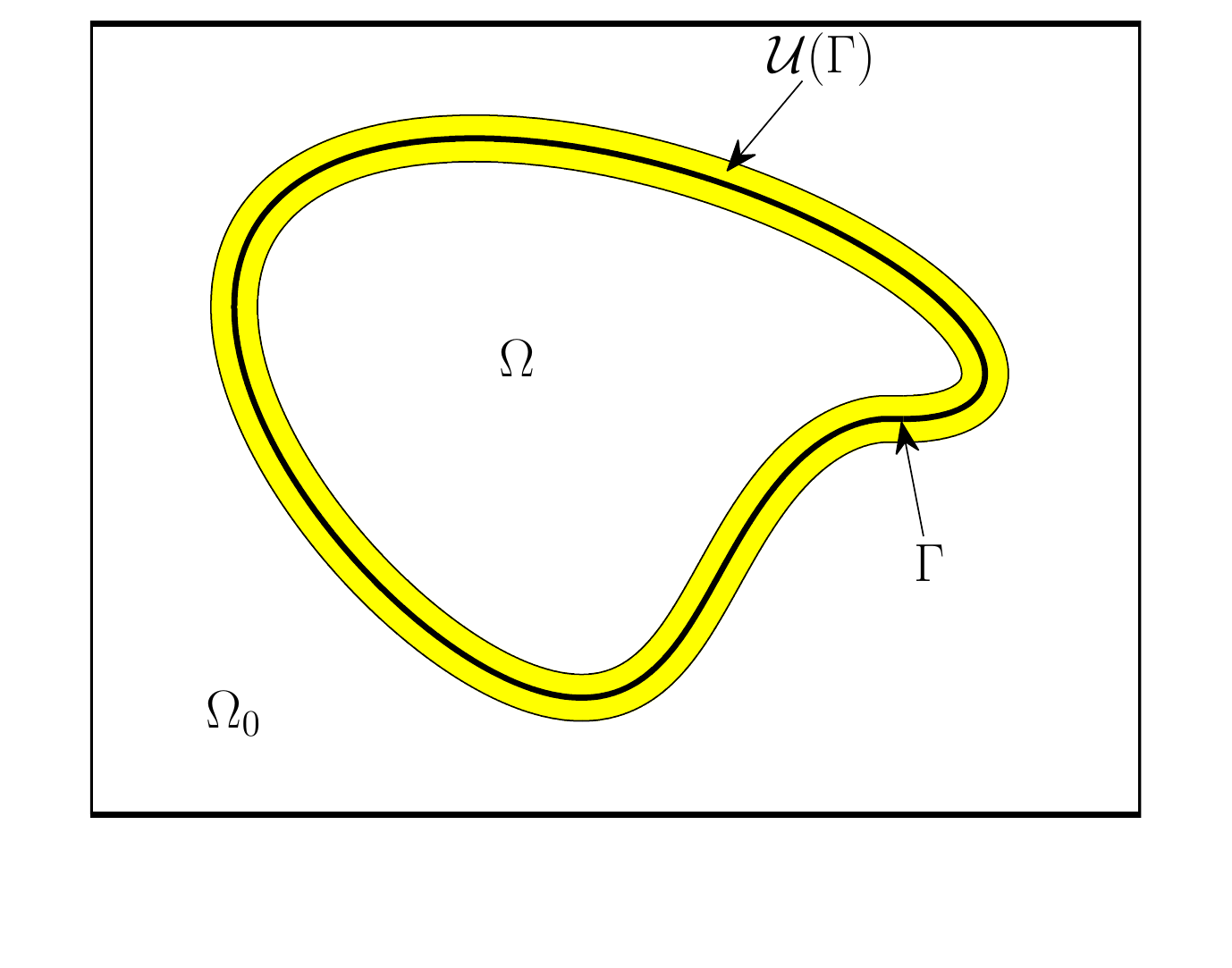}
\caption{Illustration of the domain $\Omega$, $\Omega_0$, $\mcU(\Gamma)$, and  $\Gamma$. The domain $\mcU(\Gamma)$ is the yellow region where for each $x \in \mcU(\Gamma)$ there is a unique closest point on $\Gamma$.  \label{fig:illust}}
\end{center}
\end{figure}
Let $p: \bbR^3 \ni x \mapsto \text{argmin}_{y \in \Gamma} 
|y - x| \in \Gamma$ denote the closest point mapping. Then there 
is an open neighborhood $\mcU(\Gamma)$ of $\Gamma$ such that for 
each $x \in \mcU(\Gamma)$ there is a uniquely determined $p(x)\in \Gamma$. 
We let $\rho$ be the signed distance function, $\rho(x) = | p(x) -  x |$ 
in $\bbR^3 \setminus \Omega$ and $\rho(x) = - |p(x) - x|$ in $\Omega$. We define 
the extension of any function define on $\Gamma$ to $\mcU(\Gamma)$ as follows 
\begin{equation}\label{eq:extension}
v^e = v \circ p
\end{equation}

Let $\Omega_0$ be a domain in $\bbR^3$ that contains $\Omega \cup 
\mcU(\Gamma)$ and let $\mcK_{0,h}$ be a quasiuniform partition of 
$\Omega_0$ into shape regular tetrahedra with mesh parameter $h$. See Fig.~\ref{fig:illust} for an illustration of the different domains. We consider a continuous piecewise linear approximation $\Gamma_h$ of $\Gamma$ such that $\Gamma_h \cap K$ is a subset of a hyperplane in $\bbR^3$ for each $K \in \mcK_{0,h}$. 

We assume that $\Gamma_h \subset \mcU(\Gamma)$ and that the following 
approximation assumptions hold:
\begin{equation}\label{eq:geomassumptiona}
\| \rho \|_{L^\infty(\Gamma_h)} \lesssim h^2
\end{equation}
and
\begin{equation}\label{eq:geomassumptionb}
\| n^e - n_h \|_{L^\infty(\Gamma_h)} \lesssim h
\end{equation}
where $n_h$ denotes the piecewise constant exterior unit normal to 
$\Gamma_h$. Finally, we define $\Omega_h$ as the domain enclosed by 
$\Gamma_h$. These assumptions are consistent with the piecewise linear 
nature of the discrete surface.

\subsection{Finite Element Spaces}

We define the following sets of elements
\begin{equation}
\mcK_{B,h} = \{K \in \mcK_{h,0} \,:\, K \cap \Omega_h \neq \emptyset \},
\qquad 
\mcK_{S,h} = \{K \in \mcK_{h,0} \,:\, K \cap \Gamma_h \neq \emptyset \}
\end{equation}
and the corresponding sets
\begin{equation}\label{eq:mcnmcs}
\mcN_{B,h} = \bigcup_{K \in \mcK_{B,h}} K,
\qquad
\mcN_{S,h} = \bigcup_{K \in \mcK_{S,h}} K  
\end{equation}
We let $V_{0,h}$ be the space of piecewise linear continuous functions defined 
on $\mcK_{0,h}$. Next let 
\begin{equation}
V_{B,h} = V_{0,h}|_{\mcN_{B,h}} ,
\qquad V_{S,h}=V_{0,h}|_{\mcN_{S,h}}/ \langle 1_{\Gamma_h}\rangle,
\qquad W_h = V_{B,h} \times V_{S,h}
\end{equation}
be the spaces of continuous piecewise linear polynomials defined on 
$\mcN_{B,h}$ and $\mcN_{S,h}$, respectively, where we also enforced 
$\int_{\Gamma_h} v_S = 0$ for $v \in V_{S,h}$.

\subsection{The Finite Element Method}

The finite element method takes the form: find $u_h = (u_{B,h},u_{S,h}) \in W_h$ 
such that
\begin{equation}\label{eq:fem}
A_h(u_h , v) = l_h(v) \quad \forall v \in W_h
\end{equation}
Here the bilinear form is defined by
\begin{equation}
A_h(v,w) = a_h(v,w) + j_h(v,w)
\end{equation}
with 
\begin{equation}
a_h(v,w) = a_{B,h}(v_B,w_B) + a_{S,h}(v_S,w_S) + a_{BS,h}(v,w)
\end{equation}
and 
\begin{equation}\label{eq:discformsa}
\begin{cases}
a_{B,h}( u_B , v_B)={b_B} ( k_B\nabla u_B , \nabla v_B )_{\Omega_h} 
\\
 a_{S,h}(u_S,v_S) = {b_S} (k_S \nabla_S  u_S , \nabla_S  v_S )_{\Gamma_h}
 \\
 a_{BS,h}(u,v) = ({b_B} u_B -{b_S} u_S,{b_B} v_B  - {b_S} v_S )_{\Gamma_h}
 =(b\cdot u,b \cdot v )_{\Gamma_h}
\end{cases}
\end{equation}
where $\nabla_{\Gamma_h} = P_h \nabla$ and $P_h = I - n_h \otimes n_h$. Next $j_h(v,w)$ is a stabilizing term of the form
\begin{equation}\label{eq:discformsj}
j_h(v,w )= \tau_B h^3 j_B(v_B,w_B) + \tau_S j_S (v_S,w_S)
\end{equation}
where $\tau_B, \tau_S$ are positive parameters and, letting $[x]\vert_F$
denote the jump of $x$ over the face $F$,
\begin{align}
j_B(v_B,w_B) &= \sum_{F \in \mcF_{B,h}} ([n_F\cdot \nabla v_B],[n_F\cdot \nabla w_B])_F
\\
j_S(v_S,w_S) &= \sum_{F \in \mcF_{S,h}} ([n_F \cdot \nabla v_S],[n_F\cdot \nabla w_S])_F
\end{align}
with $\mcF_{S,h}$ the set of internal faces (i.e. faces with two 
neighbors) in $\mcK_{S,h}$ and $\mcF_{B,h}$ denotes the set of faces that 
are internal in $\mcK_{B,h}$ and belong to an element in $\mcK_{S,h}$. 
Finally, the right hand side is defined by
\begin{equation}\label{eq:discformsl}
l_h(v) = l_{B,h}(v_B) + l_{S,h}(v_S) =  b_B(f_{B,h},v_B)_{\Omega_h} 
+ b_S(f_{S,h},v_S)_\Gammah
\end{equation}
with $f_{B,h}$ and $f_{S,h}$ discrete approximations of $f_B$ and 
$f_S$ that will be specified more precisely below.

The purpose of the stabilization terms is to ensure that the resulting 
algebraic system of equations is well conditioned.


\section{A Priori Error Estimates}
\paragraph{Outline of the proof.} To prove a priori error estimates we 
first construct a bijective mapping $F_h$ that maps the exact domain 
to the approximate domain. The mapping is used to lift the 
discrete solution onto the exact domain where the error is evaluated. 
The construction of the mapping is based on a representation of the 
discrete boundary $\Gamma_h$ as 
a normal function over the exact boundary $\Gamma$ together with an extension 
to a small tubular  $\delta$ neighborhood of the boundary. In the remainder 
of the domain $F_h$ is the identity mapping. Next a Strang 
type lemma relates the error in the computed solution to an interpolation 
error and quadrature errors emanating from the approximation of the domain. 
Using the assumptions on the approximation properties of the discrete surface 
we derive bounds on 
the quadrature errors. The surface quadrature errors are $O(h^2)$ while 
the bulk quadrature error is $O(h)$ in the $\delta$ neighborhood and zero elsewhere. To establish an optimal order energy norm error estimate only 
first order estimates of the quadrature errors are needed but for $L^2$ 
error estimates second order estimates are necessary. To achieve a 
second order estimate of the quadrature error we utilize the fact that 
$\delta$ can be chosen in the form $\delta = C h$ with a sufficiently large 
$C$.
\subsection{Mapping the Exact Domain to the Approximate Domain}

\paragraph{The Mapping $\boldsymbol{F_h}$:} For $\delta>0$ let 
$\mcU_\delta(\Gamma)$ be the open tubular $\delta$ neighborhood 
\begin{equation}
\mcU_\delta(\Gamma )= \{ x \in \bbR^3: |\rho(x)|< \delta \} 
\end{equation}
For $0<\delta\leq \delta_0$, where $\delta_0$ is a constant, that only 
depend on the domain, chosen such that 
$\mcU_{\delta_0}(\Gamma) \subset \mcU(\Gamma)$, 
the mapping
\begin{equation}
\mcU_\delta (\Gamma) \ni x \mapsto (p(x),\rho(x))\in \Gamma \times (-\delta,\delta) 
\end{equation}
is a bijection with inverse
\begin{equation}
\Gamma \times (-\delta,\delta) \ni (x,z) \mapsto x + z n(x) \in \mcU_\delta (\Gamma)
\end{equation}
We next note that there is a function $\gamma_h: \Gamma \rightarrow \bbR$ 
such that 
\begin{equation}\label{eq:gammahdef}
q_h: \;\Gamma \ni x \mapsto x + n(x) \gamma_h(x) \in \Gamma_h
\end{equation}
is a bijection. {Since for $x \in \Gamma_h$ there holds $p(x) = x -
n^e(x) \rho(x)$ we may deduce that $q_h(x)$ is the inverse mapping to
$p(x):\Gamma_h \mapsto \Gamma$.} Using the assumptions on the approximation properties 
(\ref{eq:geomassumptiona}) and (\ref{eq:geomassumptionb}) we obtain 
the following estimates {(see Appendix)}
\begin{equation}\label{eq:gammahest}
\|\gamma_h \|_{L^\infty(\Gamma)}\lesssim h^2, 
\qquad \|\nablag \gamma_h \|_{L^\infty(\Gamma)} \lesssim h
\end{equation}
Assuming that $h$ is sufficiently small so that $\Gamma_h \subset \mcU_{\delta/3}(\Gamma)$ we may define the mapping
\begin{equation}\label{eq:defFh}
F_h: \Omega_0 \ni x \mapsto x + \chi(\rho(x)) n^e(x) \gamma_h^e (x) \in \Omega_0  
\end{equation}
where $\chi:(-\delta,\delta) \rightarrow [0,1]$ is a smooth cut off function 
that equals $1$ on $(-\delta/3,\delta/3)$ and $0$ on 
$(-\delta,\delta)\setminus(-2\delta/3,2\delta/3)$ and the derivative $D \chi$ satisfies the estimate 
\begin{equation}\label{eq:Dchiest}
\|D \chi \|_{L^\infty(-\delta,\delta)} \lesssim \delta^{-1}
\end{equation}
We note that by construction $F_h:\Omega_0 \rightarrow \Omega_0$ is a bijection 
such that  
\begin{equation}
F_h(\Omega) = \Omega_h, \qquad F_h(\Gamma) = \Gamma_h
\end{equation}
and 
\begin{equation}
F_h= I \quad \text{in $\Omega_0 \setminus \mcU_{\delta}(\Gamma)$}
\end{equation}

\paragraph{The Derivative $\boldsymbol{DF_h}$:}
The derivative 
$DF_h(x) \in \mathcal{L}(\bbR^3,\bbR^3)$ of $F_h$ at $x\in \Omega_0$ 
is given by
\begin{align}
DF_h(x) &=I + \Big( \chi(\rho(x)) n^e(x) \Big) D (\gamma_h^e(x)) 
\\ \nonumber
&\qquad + \Big(D( \chi(\rho(x)) n^e(x) ) \Big) \gamma_h^e(x)
\\
&=I + \Big( \chi (\rho(x)) n^e(x) \Big) (D \gamma_h)^e(x) Dp(x) 
\\ \nonumber
&\qquad + \Big((D \chi)(\rho(x)) D \rho(x) n^e(x) \Big) \gamma_h^e(x)
\\ \nonumber
&\qquad
+ \Big( \chi(\rho(x)) (Dn)^e(x) Dp(x) \Big) \gamma_h^e(x)
\end{align}
Next we note that 
\begin{equation}
D \rho = n^e, \qquad Dn=\curvten, \qquad  Dp = P_\Gamma^e - \rho \curvten
\end{equation}
where we used the identity $p(x) = x - \rho(x) D\rho(x)  =x - \rho(x) n^e(x)$ and introduced the curvature tensor $\curvten(x) = \nabla \otimes \nabla \rho(x), x \in \Gamma$. Note that it holds $\|\curvten \|_{L^\infty(\mcU_\delta(\Gamma)})\lesssim 
1$ for $\delta$ small enough. Thus we have
\begin{align}\label{eq:DFh}
DF_h(x) 
&=I + \chi (\rho(x)) n^e(x) \otimes (\nabla_\Gamma \gamma_h)^e(x) (P_\Gamma^e(x) - \rho(x) \curvten(x)) 
\\ \nonumber
&\qquad + \gamma_h^e(x)(D \chi)(\rho(x)) n^e(x) \otimes n^e(x) 
\\ \nonumber
&\qquad
+ \chi(\rho(x)) \gamma_h^e(x) \curvten^e(x) (P_\Gamma^e(x) - \rho(x) \curvten(x)) 
\end{align}
%
%
%
%
%
%
On the surface $\Gamma$ we have the simplified expression
\begin{equation}\label{eq:DFhSurface}
DF_h(x) = I + n(x) \otimes \nabla_\Gamma \gamma_h(x) +  \gamma_h(x) 
\curvten(x) 
\end{equation}
since $\chi = 1$ and $D\chi=0$ in a neighborhood of $\Gamma$ and 
$\rho(x) = 0$ for $x\in \Gamma$.
We note that $DF_h(x)$ maps the tangent space $T_x(\Gamma)$ into the
piecewise defined tangent space $T_{F_h(x)}(\Gamma_h)$. In other 
words we have the identity 
\begin{equation}\label{eq:Fhtan}
DF_h P_\Gamma = (P_\Gammah\circ F_h) DF_h P_\Gamma
\end{equation}
and the mapping 
\begin{equation}\label{eq:DFhGamma}
DF_{h,\Gamma}(x):T_x(\Gamma) \ni y \mapsto 
(P_\Gammah \circ F_h) DF_h P_\Gamma y \in T_{F_h(x)}(\Gamma_h)
\end{equation}
is invertible. Observing that by \eqref{eq:gammahest}, $DF_h = I +
O(h)$, for small enough $h$ we have the bounds
\begin{equation}\label{eq:DFhbounds}
\|DF_{h} \|_{L^\infty(\Omega_0,\mathcal{L}(\bbR^3,\bbR^3))} \lesssim 1,
\qquad
\|DF^{-1}_{h} \|_{L^\infty(\Omega_0,\mathcal{L}(\bbR^3,\bbR^3))} \lesssim 1
\end{equation}
and
\begin{equation}\label{eq:DFhboundsgamma}
\|DF_{h,\Gamma} \|_{L^\infty(\Gamma,\mathcal{L}(T_x(\Gamma),T_{F_h(x)}(\Gamma_h))} \lesssim 1,
\qquad
\|DF^{-1}_{h,\Gamma} \|_{L^\infty(\Gamma,\mathcal{L}(T_{F_h(x)}(\Gamma_h),T_x(\Gamma))} \lesssim 1
\end{equation}
Below we simplify the notation as follows 
$\|DF_{h} \|_{L^\infty(\Omega_0)} = \|DF_{h} \|_{L^\infty(\Omega_0,\mathcal{L}(\bbR^3,\bbR^3))}$ for the mappings $DF_h$ and $DF_{h,\Gamma}$ and their inverses.

\paragraph{The Jacobian Determinants $\boldsymbol{JF_{h}}$ 
and $\boldsymbol{JF_{h,\Gamma}}$:} We have the following relations 
between the measures on the exact and approximate surface and domain
\begin{equation}
d \Omega_h = JF_{h} d \Omega, 
\qquad  
d \Gamma_h = JF_{h,\Gamma} d \Gamma
\end{equation}
where the Jacobian determinants are defined by
\begin{align}
JF_{h}(x) &= | \text{det}(DF_h(x))|
\\
JF_{h,\Gamma}(x) &= |DF_{h,\Gamma}(x)\xi_1 \times DF_{h,\Gamma}(x)\xi_2|
\end{align}
and $\{\xi_1,\xi_2\}$ is an orthonormal basis in $T_x(\Gamma)$.
We note that $JF_{h} = 1$ on $\Omega_0 \setminus \mcU_\delta (\Gamma)$ 
and recall that $DF_h = I + O(h)$. Thus we 
have the following estimates in the bulk
\begin{equation}\label{eq:Jacdetbulk}
\|JF_{h}\|_{L^\infty(\Omega_0)} \lesssim 1,\qquad
\|JF^{-1}_{h}\|_{L^\infty(\Omega_0)} \lesssim  1,\qquad
\|1 - JF_{h}\|_{L^\infty(\mcU_{\delta}(\Gamma))} \lesssim h
\end{equation}
since the determinant is a third order polynomial of the elements in 
$DF_h$. On the surface we note that 
\begin{equation}
DF_{h,\Gamma}(x)\xi = \xi + n (\xi \cdot  \nabla_\Gamma \gamma_h ) 
+ \gamma_h \curvten \cdot \xi \qquad \forall \xi \in T_x(\Gamma)
\end{equation}
where the last term is $O(h^2)$.
The Jacobian determinant $JF_{h,\Gamma}$ is the norm of the cross product
\begin{align}
|DF_{h,\Gamma}(x)\xi_1 \times DF_{h,\Gamma}(x)\xi_2| &= 
|(\xi_1 + n (\xi_1 \cdot  \nabla_\Gamma \gamma_h )) 
\times (\xi_2 + n (\xi_2 \cdot  \nabla_\Gamma \gamma_h )) | + O(h^2)
\nonumber \\
&=|n - \xi_1 (\xi_1 \cdot \nabla_\Gamma \gamma_h )
- \xi_2 (\xi_2 \cdot \nabla_\Gamma \gamma_h ) | + O(h^2)
\nonumber \\
&=\Big( 1 + (\xi_1 \cdot \nabla_\Gamma \gamma_h)^2 
+ (\xi_2 \cdot \nabla_\Gamma \gamma_h)^2  \Big)^{1/2} + O(h^2)
\nonumber \\
&=1 + O(h^2)
\end{align} 
where we used the identities $\xi_1 \times \xi_2 = n$, 
$n\times \xi_2 = -\xi_1$, $\xi_1 \times n = - \xi_2$, 
$n\times n = 0$, the fact that $\{\xi_1,\xi_2,n\}$ is a positively oriented orthonormal basis in $\bbR^3$ to compute the norm, and finally the estimate $(1+\delta)^{1/2} \leq 1+\delta/2$, $\delta>0$ in the last step. We thus have the following 
estimates for the surface Jacobian
\begin{equation}\label{eq:Jacdetsurf}
\|JF_{h,\Gamma}\|_{L^\infty(\Gamma)} \lesssim 1,\qquad
\|JF^{-1}_{h,\Gamma}\|_{L^\infty(\Gamma)} \lesssim  1,\qquad
\|1 - JF_{h,\Gamma}\|_{L^\infty(\Gamma)} \lesssim h^2
\end{equation}

\subsection{Lifting to the Exact Domain}

We define the lifting or pullback  of $v^L$ with respect to $F_h$ 
of a function $v$ defined on $\Omega_0$ as follows
\begin{equation}
v^L := v \circ F_h
\end{equation}
We note in particular that any function defined on 
$\Omega_h$ and $\Gamma_h$ may be lifted to a function 
on $\Omega$ and $\Gamma$.
%
Using the chain rule
\begin{equation}
D v^L = D(v \circ F_h) = (D v \circ F_h ) DF_h = (Dv)^L DF_h
\end{equation}
and thus we obtain the identities
\begin{equation}
\nabla v^L = DF_h^T (\nabla v \circ F_h )= DF_h^T (\nabla v)^L
\end{equation}
\begin{align}
\nablag v^L 
&= P_\Gamma \nabla v^L
= P_\Gamma DF_h^T (\nabla v )^L 
\nonumber \\ 
&\qquad 
= P_\Gamma DF_h^T P_\Gammah^L (\nabla v )^L
= (P_\Gamma DF_h^T P_\Gammah^L) (\nabla_\Gammah v )^L 
= DF_{h,\Gamma}^T  (\nabla_\Gammah v )^L 
\end{align}
where $DF_{h,\Gamma}$ was defined in (\ref{eq:DFhGamma}). 
Summarizing, we have the relations 
\begin{equation}\label{eq:LiftDer}
\nabla v^L = DF_h^T (\nabla v)^L,\qquad (\nabla v)^L = DF_h^{-T} \nabla v^L
\end{equation}
and
\begin{equation}\label{eq:LiftDerGamma}
\nabla_\Gamma v^L = DF_{h,\Gamma}^T (\nabla_\Gammah v )^L,
\qquad
(\nabla_\Gammah v )^L = DF_{h,\Gamma}^{-T}\nabla_\Gamma v^L
\end{equation}
Using the bounds (\ref{eq:DFhbounds}) and (\ref{eq:DFhboundsgamma}) we 
conclude that the following equivalences hold
\begin{equation}\label{eq:equivalencebulk}
\| \nabla v^L \|_{L^2(\Omega)} \lesssim \| (\nabla v)^L \|_{L^2(\Omega)} 
\lesssim \| \nabla v^L \|_{L^2(\Omega)}
\end{equation}
and
\begin{equation}\label{eq:equivalencesurf}
\| \nabla_\Gamma v^L \|_{L^2(\Gamma)} \lesssim \| (\nabla_\Gammah v)^L \|_{L^2(\Gamma)} 
\lesssim \| \nabla_\Gamma v^L \|_{L^2(\Gamma)}
\end{equation}

\subsection{Interpolation}

Let $E_B:H^2(\Omega) \rightarrow H^2(\Omega_0)$ be an extension operator 
such that 
\begin{equation}\label{eq:stabEB}
\|E_B v\|_{H^2(\Omega_0)} \lesssim \| v \|_{H^2(\Omega)} 
\end{equation}
and $E_S:H^2(\Gamma) \rightarrow H^2(\mcU(\Gamma))$ be the extension 
operator such that $E_S v = v \circ p$. Then we have the estimate
\begin{equation}\label{eq:stabES}
\|E_S v\|_{H^2(\mcU_\delta(\Gamma))} \lesssim \delta^{1/2}\| v \|_{H^2(\Gamma)} 
\end{equation}
for any $\delta>0$ such that $\mcU_\delta(\Gamma) \subset \mcU(\Gamma)$. We 
finally define the extension operator
\begin{equation}
E:H^2(\Omega) \times H^2(\Gamma)\ni (u_B,u_S) \mapsto (E_B u_B, E_S u_S) 
\in H^2(\Omega_0) \times H^2(\mcU(\Gamma))
\end{equation}
When suitable we simplify the notation and write $u=Eu$. We let $\pi_{SZ,h}: L^2(\Omega_0) \rightarrow V_{0,h}$ denote the standard 
Scott-Zhang interpolation operator and recall the interpolation error estimate
\begin{equation}\label{interpolstandard}
\| v - \pi_{SZ,h} v \|_{H^m(K)} \leq C h^{2-m} \| v \|_{H^2(\mcN(K))}, \quad m = 1,2, \quad K \in \mcK_{0,h}
\end{equation}
where $\mcN(K)\subset \Omega_h$ is the union of the neighboring elements of 
$K$. We then define the interpolant 
\begin{equation}
\pi_h u = (\pi_{B,h} u_B,  \pi_{S,h} u_S)
\end{equation}
where
\begin{equation}
\pi_{B,h} u_B = (\pi_{SZ,h} E_B u_B)|_{\mcN_{B,h}} \in  V_{B,h}
\end{equation}
and
\begin{equation}
 \pi_{S,h} u_S= (\pi_{SZ,h} E_S u_S)|_{\mcN_{S,h}} \in V_{S,h}
\end{equation}
We use the notation 
\begin{equation}\label{eq:interpollift}
\pi_h^L u = (\pi_h u)^L = (\pi_h u) \circ F_h
\end{equation}
for the pullback of $\pi_h u$ to $\Omega$ by $F_h$.
With these definitions we 
have the following lemma:
\begin{lem}\label{lem:approx} The following estimate holds
\begin{equation}\label{eq:interpolest}
\tn u - \pi_h^L u \tn  \lesssim h\| u \|_{H^2(\Omega) \times H^2(\Gamma)}
\end{equation}
\end{lem}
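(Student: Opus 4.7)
The energy norm splits as
\[
\tn u - \pi_h^L u\tn^2 = b_B k_B\|\nabla(u_B - (\pi_{B,h} u_B)^L)\|_{L^2(\Omega)}^2 + b_S k_S\|\nablag(u_S - (\pi_{S,h} u_S)^L)\|_{L^2(\Gamma)}^2 + \|b\cdot(u - \pi_h^L u)\|_{L^2(\Gamma)}^2,
\]
so my plan is to bound each squared summand by $Ch^2\|u\|_{H^2(\Omega)\times H^2(\Gamma)}^2$ separately. For all three terms, the common device is the decomposition
\[
u - \pi_h^L u = \bigl(Eu - (Eu)\circ F_h\bigr) + \bigl(Eu - \pi_h Eu\bigr)\circ F_h,
\]
valid after identifying $u$ with $Eu$ on $\Omega\cup\Gamma$. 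This isolates a \emph{geometric} summand quantifying the distortion induced by $F_h$ on the smooth extension $Eu$ (vanishing outside $\mcU_\delta(\Gamma)$) and a standard \emph{interpolation} summand composed with $F_h$.

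For the bulk gradient I differentiate using \eqref{eq:LiftDer} and split the geometric part further as
\[
\nabla E_B u_B - DF_h^T(\nabla E_B u_B)\circ F_h = (I - DF_h^T)\nabla E_B u_B + DF_h^T\bigl(\nabla E_B u_B - (\nabla E_B u_B)\circ F_h\bigr).
\]
The first term on the right is $O(h)\|u_B\|_{H^1}$ using $\|I-DF_h\|_{L^\infty(\mcU_\delta(\Gamma))}\lesssim h$ (from \eqref{eq:DFhbounds} together with $DF_h = I$ outside $\mcU_\delta(\Gamma)$) and \eqref{eq:stabEB}; the second is $O(h^2)\|u_B\|_{H^2}$ by the fundamental theorem of calculus using $|F_h - I|\lesssim h^2$ from \eqref{eq:gammahest}. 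The interpolation summand is transformed to an integral on $\Omega_h$ by the change of variables $y = F_h(x)$ together with \eqref{eq:Jacdetbulk} and \eqref{eq:DFhbounds}, and then controlled by $Ch\|u_B\|_{H^2(\Omega)}$ via \eqref{interpolstandard} on $\mcN_{B,h}$ and \eqref{eq:stabEB}.

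The surface gradient and coupling $L^2(\Gamma)$ contributions follow the same template with \eqref{eq:LiftDerGamma}, \eqref{eq:DFhboundsgamma} and \eqref{eq:Jacdetsurf} in place of their bulk analogues. To pass from a bulk interpolation bound on $\mcN_{S,h}$ to a surface bound on $\Gammah$, I apply an elementwise trace inequality $\|w\|_{L^2(K\cap\Gammah)}^2 \lesssim h^{-1}\|w\|_{L^2(K)}^2 + h\|\nabla w\|_{L^2(K)}^2$ for $K\in\mcK_{S,h}$ and sum, then change variables from $\Gammah$ to $\Gamma$ using $JF_{h,\Gamma}$. The geometric contributions on $\Gamma$ simplify because $p\circ F_h = \mathrm{id}_\Gamma$, reducing them to a normal/curvature perturbation controlled by the $O(h)$-bound on $P_{\Gamma_h} - P_\Gamma^e$ from \eqref{eq:geomassumptionb} and by \eqref{eq:gammahest}.

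The main obstacle is to avoid losing optimality in the surface interpolation step. The $h^{-1}$-factor produced by the elementwise trace is exactly compensated by the $\delta^{1/2}$-factor in \eqref{eq:stabES}: since $\mcN_{S,h}\subset \mcU_{Ch}(\Gamma)$ for a suitable constant $C$, applying \eqref{eq:stabES} with $\delta = Ch$ yields $\|E_S u_S\|_{H^2(\mcN_{S,h})}\lesssim h^{1/2}\|u_S\|_{H^2(\Gamma)}$, which combined with the $Ch$ factor from \eqref{interpolstandard} produces the required $O(h)\|u_S\|_{H^2(\Gamma)}$ in the end.
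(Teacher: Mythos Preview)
Your proposal is correct and follows essentially the same route as the paper: the same geometric/interpolation splitting for the bulk term (the paper calls the geometric piece a ``Sobolev Taylor formula'' estimate, which is your fundamental-theorem-of-calculus argument), and the same elementwise trace inequality on $\Gamma_h$ combined with the $\delta^{1/2}$-gain from \eqref{eq:stabES} (with $\delta\sim h$) to recover optimality for the surface term. The only cosmetic differences are that the paper absorbs the coupling term into the $H^1$ contributions via a trace inequality at the outset rather than treating it separately, and that it bypasses your geometric/interpolation split on the surface by changing variables directly to $\Gamma_h$ --- which is equivalent to your observation that $p\circ F_h=\mathrm{id}_\Gamma$ makes the geometric piece trivial there.
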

\begin{proof} Using a trace inequality we obtain
\begin{align}
\tn u - \pi_h^L u \tn^2 &= b_Bk_B\| \nabla (u_B - \pi_{B,h}^L u_B)\|^2_{L^2(\Omega)}
 + b_Sk_S \| \nabla (u_S - \pi_{S,h}^L u_S)\|^2_{L^2(\Gamma)}
\nonumber \\ 
&\qquad
+ \| b_B (u_B - \pi_{B,h}^L u_B) - b_S (u_S - \pi_{S,h}^L u_S) \|^2_{L^2(\Gamma)}  
\nonumber \\
&\lesssim \| u_B - \pi_{B,h}^L u_B\|^2_{H^1(\Omega)} 
+ \| u_S - \pi_{S,h}^L u_S\|^2_{H^1(\Gamma)}
\nonumber\\
&=I+II
\end{align}
\paragraph{Term $\bfI$.}
The first term may be estimated as follows
\begin{align}
I&=\| u_B - \pi_{B,h}^L u_B\|_{H^1(\Omega)} =\| u_B -(\pi_{SZ,h} E_B u_B|_{\Omega_h} )^ L\|_{H^1(\Omega)} 
\nonumber\\
&\leq \label{eq:interpollemmaa}
\| u_B - (E_B u_B|_{\Omega_h})^L \|_{H^1(\Omega)} 
+ \|((I-\pi_{SZ,h}) E_B u_B|_{\Omega_h})^L\|_{H^1(\Omega)}
\nonumber\\
&\lesssim h \| u_B \|_{H^2(\Omega)}
\end{align}
{Here we used the Sobolev Taylor's formula, see \cite{BreSco08},
to estimate the first 
term: consider first a function $v\in H^2(\Omega_0)$; then we have }
\begin{equation}
\| v - v \circ F_h \|_{L^2(\Omega_0)} 
\lesssim  
\|I-F_h\|_{L^\infty(\Omega_0)} \| \nabla v \|_{L^2(\Omega_0)}
\lesssim
h^2 \| v \|_{H^1(\Omega_0)}
\end{equation} 
and for the derivative
\begin{align}
&\| \nabla (v - v \circ F_h) \|_{L^2(\Omega_0)} 
\nonumber\\
&\qquad=\| \nabla v - DF_h^T (\nabla v \circ F_h) \|_{L^2(\Omega_0)}
\nonumber\\
&\qquad\leq \| \nabla v - (\nabla v \circ F_h)\|_{L^2(\Omega_0)} 
+ \| (DF_h^T -I) (\nabla v \circ F_h) \|_{L^2(\Omega_0)}
\nonumber\\
&\qquad\lesssim  
\|I-F_h\|_{L^\infty(\Omega_0)} \| \nabla v \|_{H^1(\Omega_0)}
+
\|(DF_h^T - I)\|_{L^\infty(\Omega_0)} \| \nabla v \|_{L^2(\Omega_0)}
\nonumber\\
&\qquad\lesssim h^2 \| v \|_{H^2(\Omega_0)} + h \| \nabla v \|_{L^2(\Omega_0)}
\nonumber\\
&\qquad \lesssim h \| v \|_{H^2(\Omega_0)}
\end{align}
Now we may apply these inequalities with $v=E_B u_B$ and finally use 
the stability (\ref{eq:stabEB}) of the extension operator $E_B$. 

The 
second term in (\ref{eq:interpollemmaa}) is estimated by mapping to the 
discrete domain using the interpolation estimate (\ref{interpolstandard}) 
and then using the stability estimate (\ref{eq:stabEB}).

\paragraph{Term $\bfI\bfI$.} Changing domain of integration 
from $\Gamma$ to $\Gamma_h$ and then using an element--wise trace inequality
we obtain
\begin{align}
\| \nabla_\Gamma(u_S - \pi_{S,h}^L u_S) \|_{L^2(\Gamma)}^2 
&=\|DF_{h,\Gamma}^{T} \nabla_{\Gamma_h}(u_S^e- \pi_{S,h} u_S) |JF_{h,\Gamma}|^{-1/2} \|^2_{L^2(\Gamma_h)}
\nonumber\\
&\lesssim \sum_{K \in \mcK_{S,h}} h^{-1} \| u_S^e- \pi_{S,h} u_S \|_{H^1(K)} 
+ h \| u_S^e- \pi_{S,h} u_S \|_{H^2(K)}^2
\nonumber\\
&\lesssim \sum_{K \in \mcK_{S,h}} h \|u_S^e\|^2_{H^2(\mcN(K))}
\nonumber\\
&\lesssim h^2 \| u_S \|^2_{H^2(\Gamma)}
\end{align}
Here we used the interpolation estimate (\ref{interpolstandard}) followed 
by the stability estimate (\ref{eq:stabES}) for the extension operator 
with $\delta \sim h$, which is possible since there is $\delta \lesssim h$ such that $\mcK_{S,h}\subset\mcU_\delta(\Gamma)$. 
\end{proof}

We also need the face norm
\begin{align}
\tn v \tn_\mcF^2 &= h^3 j_B(v_{B,h},v_{B,h}) + j_S(v_{S,h},v_{S,h}) 
\\
&=\sum_{F \in \mcF_{B,h}} h^3 \|[n_F\cdot \nabla v_B]\|_{L^2(F)}^2
+\sum_{F \in \mcF_{S,h}} \|[n_F \cdot \nabla v_S\|_{L^2(F)}^2
\end{align}
for which we have the following interpolation error estimate.
\begin{lem}\label{lem:approxface} The following estimate holds
\begin{equation}\label{eq:interpolestF}
\tn u - \pi_h u \tn_\mcF  \lesssim h\| u \|_{H^2(\Omega) \times H^2(\Gamma)}
\end{equation}
\end{lem}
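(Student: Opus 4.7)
The plan is to exploit the piecewise-linearity of the interpolants together with the fact that both extensions $E_B u_B$ and $E_S u_S$ lie in $H^2$ of a neighborhood (so their gradients have single-valued traces on interior faces, i.e.\ the jumps of $n_F \cdot \nabla u_B$ and $n_F \cdot \nabla u_S$ vanish across every face of $\mcF_{B,h}$ and $\mcF_{S,h}$). This reduces the estimate to an element-wise trace/interpolation argument. Using the convention $u = Eu$, I split
\[
\tn u - \pi_h u\tn_\mcF^2 = \underbrace{h^3 \!\!\!\sum_{F\in\mcF_{B,h}}\!\! \|[n_F\cdot\nabla(u_B-\pi_{B,h}u_B)]\|_{L^2(F)}^2}_{=:T_B} \;+\; \underbrace{\sum_{F\in\mcF_{S,h}} \|[n_F\cdot\nabla(u_S^e-\pi_{S,h}u_S)]\|_{L^2(F)}^2}_{=:T_S}
\]
and estimate each contribution separately.

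For a generic element $K$ with face $F\subset\partial K$ I would use the standard trace inequality $\|w\|_{L^2(F)}^2 \lesssim h^{-1}\|w\|_{L^2(K)}^2 + h\|\nabla w\|_{L^2(K)}^2$ applied component-wise to $w = \nabla(v - \pi_{SZ,h}v)$. Since $\pi_{SZ,h}v$ is piecewise linear, second derivatives drop out on each element and only $\nabla^2 v$ survives; combined with the Scott--Zhang estimate \eqref{interpolstandard} this yields
\[
\|\nabla(v-\pi_{SZ,h}v)\|_{L^2(F)}^2 \lesssim h\,\|v\|_{H^2(\mcN(K))}^2.
\]
Using the zero-jump property of $\nabla v$ for $v\in H^2$ and the triangle inequality on each shared face, this bound transfers directly to $\|[n_F\cdot\nabla(v-\pi_{SZ,h}v)]\|_{L^2(F)}$.

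For the bulk term $T_B$, I apply this with $v = E_B u_B$, sum over $F\in\mcF_{B,h}$ (using finite overlap of the patches $\mcN(K)$), and invoke the extension stability \eqref{eq:stabEB}:
\[
T_B \lesssim h^3 \cdot h \,\|E_B u_B\|_{H^2(\Omega_0)}^2 \lesssim h^4\,\|u_B\|_{H^2(\Omega)}^2,
\]
which is comfortably $\lesssim h^2\|u_B\|_{H^2(\Omega)}^2$. For the surface term $T_S$, I apply the same inequality with $v = E_S u_S$ and sum over $F\in\mcF_{S,h}$; since $\mcK_{S,h}\subset \mcU_\delta(\Gamma)$ for some $\delta\lesssim h$, I use the scaled stability estimate \eqref{eq:stabES} with $\delta\sim h$ to obtain
\[
T_S \lesssim h\,\|E_S u_S\|_{H^2(\mcU_\delta(\Gamma))}^2 \lesssim h\cdot\delta\,\|u_S\|_{H^2(\Gamma)}^2 \lesssim h^2\,\|u_S\|_{H^2(\Gamma)}^2.
\]
Adding the two bounds and taking square roots gives \eqref{eq:interpolestF}.

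The only delicate point, and the one I would double-check carefully, is the surface contribution: the naive trace-plus-interpolation bound collects $H^2$ norms over the entire tubular strip $\mcU_\delta(\Gamma)$, which by itself would only give an $O(h^{1/2})$ estimate. The crucial gain comes from coupling $\delta\sim h$ with the $\delta^{1/2}$ factor in \eqref{eq:stabES}, so tracking the dependence on $\delta$ through the extension operator (rather than treating it as a generic constant) is what delivers the claimed order $h$.
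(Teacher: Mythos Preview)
Your proposal is correct and follows essentially the same approach as the paper: the paper's proof is a one-sentence sketch listing precisely the ingredients you use (element-wise trace inequality, the Scott--Zhang estimate \eqref{interpolstandard}, and the extension stabilities \eqref{eq:stabEB} and \eqref{eq:stabES}), and you have faithfully fleshed out the details. Your identification of the $\delta\sim h$ coupling with the $\delta^{1/2}$ factor in \eqref{eq:stabES} as the crux for the surface term is exactly the mechanism the paper relies on here (and in the analogous step of Lemma~\ref{lem:approx}).
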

\begin{proof}
This estimate follows directly by using an element wise trace inequality, 
followed by the interpolation estimate (\ref{interpolstandard}), and finally
the stability estimates (\ref{eq:stabEB}) and (\ref{eq:stabES}) for the extension 
operators.
\end{proof}
\subsection{Strang's Lemma}

\begin{lem}\label{lem:strang} The following estimate holds
\begin{align}
\Big( \tn u-u_h^L \tn^2 + \tn u - u_h \tn_{\mcF}^2 \Big)^{1/2}
&\lesssim \Big( \tn u - \pi^L_h u  \tn^2 +  \tn u - \pi_h u \tn_{\mcF}^2 \Big)^{1/2}
\\ \nonumber 
&\qquad + \sup_{v \in W_h} \frac{a(u_h^L,v^L) - a_h(u_h,v)}{\tn v^L \tn}\nonumber
\\ \nonumber
&\qquad + \sup_{v \in W_h} \frac{l( v^L) - l_h(v)}{\tn v^L \tn}
\end{align} 
\end{lem}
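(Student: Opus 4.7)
The plan is a standard Strang-type argument tailored to the bulk/surface mesh and domain mismatch. First, I introduce the discrete error $w_h := \pi_h u - u_h \in W_h$ with lift $w_h^L = \pi_h^L u - u_h^L$. The triangle inequality immediately gives
\begin{equation*}
\tn u - u_h^L \tn \le \tn u - \pi_h^L u \tn + \tn w_h^L \tn, \qquad \tn u - u_h \tn_\mcF \le \tn u - \pi_h u \tn_\mcF + \tn w_h \tn_\mcF ,
\end{equation*}
so after squaring and adding it suffices to control the quantity $\tn w_h^L \tn^2 + \tn w_h \tn_\mcF^2 = a(w_h^L, w_h^L) + j_h(w_h, w_h)$ by the right-hand side of the claim.

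The key step is to insert the continuous and discrete problems in a way that produces exactly the two consistency terms required. Writing $w_h^L = (\pi_h^L u - u) + (u - u_h^L)$ in the first slot of $a(\cdot,\cdot)$ and using $a(u, w_h^L) = l(w_h^L)$ yields
\begin{equation*}
a(w_h^L, w_h^L) = a(\pi_h^L u - u, w_h^L) + l(w_h^L) - a(u_h^L, w_h^L) .
\end{equation*}
In parallel, I write $j_h(w_h, w_h) = j_h(\pi_h u, w_h) - j_h(u_h, w_h)$ and use the discrete equation $A_h(u_h, w_h) = l_h(w_h)$ to substitute $j_h(u_h, w_h) = l_h(w_h) - a_h(u_h, w_h)$. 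Adding these two identities and regrouping produces
\begin{align*}
a(w_h^L, w_h^L) + j_h(w_h, w_h)
&= a(\pi_h^L u - u, w_h^L) + j_h(\pi_h u, w_h) \\
&\quad + [l(w_h^L) - l_h(w_h)] - [a(u_h^L, w_h^L) - a_h(u_h, w_h)] .
\end{align*}

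To finish, I bound the right-hand side term by term. Continuity of $a$ in the energy norm (immediate from its definition as the squared energy norm) and of $j_h$ in the face norm (by a face-wise Cauchy--Schwarz), together with $j_h(Eu, w_h) = 0$ (which lets me replace $j_h(\pi_h u, w_h) = j_h(\pi_h u - Eu, w_h)$ and holds because the $H^2$-stable extension $Eu$ has continuous gradient traces across interior faces), give
\begin{equation*}
|a(\pi_h^L u - u, w_h^L)| + |j_h(\pi_h u, w_h)| \lesssim \bigl(\tn u - \pi_h^L u \tn^2 + \tn u - \pi_h u \tn_\mcF^2 \bigr)^{1/2} \bigl(\tn w_h^L \tn^2 + \tn w_h \tn_\mcF^2 \bigr)^{1/2} .
\end{equation*}
The two consistency brackets are, by choosing $v = w_h$ in the two suprema of the claim, bounded by $(S_l + S_a)\tn w_h^L \tn$. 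Dividing through by $\bigl(\tn w_h^L \tn^2 + \tn w_h \tn_\mcF^2\bigr)^{1/2}$ and reinserting into the triangle inequality closes the argument.

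The main obstacle is the bookkeeping in the central identity: the add-and-subtract pattern must be arranged so that the leftover consistency terms are evaluated at $(u_h, v)$ rather than at $(\pi_h u, v)$ or $(w_h, v)$, which is precisely the form required by the claim. A secondary technical point is the vanishing of $j_h$ on the smooth extension $Eu$, which is what allows the interpolation error in the face norm to appear on the right-hand side via $j_h(\pi_h u, w_h) = j_h(\pi_h u - Eu, w_h)$.
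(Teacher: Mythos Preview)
Your argument is correct and follows essentially the same route as the paper: split off the interpolation error by the triangle inequality, test the discrete error $w_h=\pi_h u-u_h$ against itself, and use $a(u,\cdot)=l(\cdot)$ together with the discrete equation to produce exactly the interpolation term, the face term $j_h(\pi_h u-Eu,\cdot)$, and the two consistency suprema. The only cosmetic difference is that the paper writes $j_h(\pi_h u-u,\cdot)$ directly (implicitly identifying $u$ with its extension), whereas you make the step $j_h(\pi_h u,\cdot)=j_h(\pi_h u-Eu,\cdot)$ explicit.
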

\begin{proof} Adding and subtracting an interpolant $\pi_h^L u$, defined 
by (\ref{eq:interpollift}), and using the triangle inequality we obtain
\begin{align}
 \Big(\tn u - u_h^L \tn^2 + \tn u - u_h \tn_{\mcF}^2 \Big)^{1/2}
 &\leq \Big( \tn u - \pi_h^L u \tn_h^2 + \tn u - \pi_h u \tn_{\mcF}^2 \Big)^{1/2} 
 \\ \nonumber
&\qquad  + \Big( \tn \pi_h^L u  - u_h^L \tn_h +\tn \pi_h u - u_h \tn_{\mcF}^2\Big)^{1/2}
\end{align}
To estimate the second term we start from the coercivity
\begin{equation}\label{eq:infsup}
\Big(\tn \pi_h^L u  - u_h^L \tn^2 + \tn \pi_h u - u_h \tn_{\mcF}^2 \Big)^{1/2} 
\leq \sup_{v \in W_h \setminus \{0\}} 
\frac{a(\pi_h^L u  - u_h^L,v^L)+ j_h(\pi_h u  - u_h,v)}{\Big(\tn v^L \tn^2 + \tn v \tn_{\mcF}^2\Big)^{1/2}}
\end{equation}
Adding and subtracting the exact solution, and using Galerkin orthogonality the numerator may be written in the following form
\begin{align}
&a( \pi_h^L u  - u_h^L,v^L) + j_h(\pi_h u  - u_h,v)
\nonumber \\ \nonumber
&\qquad=  
a( \pi_h^L u - u,v^L) + a(u - u_h^L,v^L)+ j_h(\pi_h u  - u_h,v)
\\ \nonumber
&\qquad=a( \pi_h^L u  - u,v^L) + l(v^L) - a(u_h^L,v^L)+ j_h(\pi_h u  - u_h,v)
\\ \nonumber 
&\qquad=a( \pi_h^L u - u,v^L) + l(v^L) -l_h(v) 
\\ \nonumber
&\qquad \qquad+ a_h(u_h,v)  + j_h(u_h,v)- a(u_h^L,v^L) + j_h(\pi_h u  - u_h,v)
\\ \nonumber
&\qquad =a( \pi_h^L u - u,v^L) + j_h(\pi_h u  - u,v)  
\\ 
&\qquad \qquad+ \Big( a_h(u_h,v) - a(u_h^L,v^L) \Big) 
+ \Big( l(v^L) -l_h(v) \Big)
\end{align}
Using (\ref{eq:infsup}) and estimating the first term using the
Cauchy-Schwarz inequality the lemma follows directly.
\end{proof}

\subsection{Estimate of the Quadrature Errors}
\begin{lem}\label{lem:quada} 
If $h \lesssim  \delta \leq \delta_0$ and $h$ is small enough. Then 
it holds
\begin{align}
|a(v^L,w^L) - a_h(v,w)|
\lesssim {} & h^2 \| \nabla_\Gamma v_S^L \|_{L^2(\Gamma)} 
\| \nabla_\Gamma w_S^L \|_{L^2(\Gamma)} 
\\ \nonumber
& + h^2 \|b \cdot v^L \|_{L^2(\Gamma)} \|b \cdot w^L\|_{L^2(\Gamma)} 
\\ \nonumber
&
+
h \| \nabla v_B^L \|_{L^2(\mcU_{\delta}(\Gamma)\cap \Omega)} 
\| \nabla w_B^L \|_{L^2(\mcU_{\delta}(\Gamma)\cap \Omega)} \quad \forall v,w\in W_h
\end{align} 
\end{lem}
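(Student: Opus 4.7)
The plan is to split the difference according to
$a - a_h = (a_B - a_{B,h}) + (a_S - a_{S,h}) + (a_{BS} - a_{BS,h})$
and treat each piece by pulling the integrals over $\Omega_h$ and $\Gamma_h$
back to $\Omega$ and $\Gamma$ via the diffeomorphism $F_h$, using the lift
formulas (\ref{eq:LiftDer}) and (\ref{eq:LiftDerGamma}) together with the
bounds (\ref{eq:DFhbounds})--(\ref{eq:Jacdetsurf}).

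For the bulk term a change of variables turns $a_{B,h}(v_B,w_B)$ into
\begin{equation*}
b_B k_B \int_\Omega \bigl( DF_h^{-1} DF_h^{-T} JF_h \bigr)\, \nabla v_B^L \cdot \nabla w_B^L \, dx,
\end{equation*}
so the difference with $a_B(v_B^L,w_B^L)$ is controlled by the
$L^\infty$ norm of $I - DF_h^{-1} DF_h^{-T} JF_h$ on $\Omega$. By
construction $F_h$ is the identity outside $\mcU_\delta(\Gamma)$, so the
matrix vanishes there. Inside $\mcU_\delta(\Gamma)$, the formula (\ref{eq:DFh})
together with (\ref{eq:gammahest}) and the assumption $h \lesssim \delta$
(which controls the $(D\chi)\gamma_h^e$ term) yields $DF_h = I + O(h)$,
and combining with (\ref{eq:Jacdetbulk}) gives
$\|I - DF_h^{-1} DF_h^{-T} JF_h\|_{L^\infty(\mcU_\delta(\Gamma))} \lesssim h$;
Cauchy--Schwarz then produces the third term in the estimate. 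The coupling
term is simplest: since $b = (b_B,-b_S)$ is constant one only picks up a
Jacobian,
\begin{equation*}
a_{BS,h}(v,w) - a_{BS}(v^L,w^L) = \int_\Gamma (JF_{h,\Gamma} - 1)(b\cdot v^L)(b\cdot w^L)\, d\sigma,
\end{equation*}
and (\ref{eq:Jacdetsurf}) with Cauchy--Schwarz produces the second term.

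The delicate piece is $a_S - a_{S,h}$, where a naive application of
(\ref{eq:DFhboundsgamma}) and (\ref{eq:Jacdetsurf}) would only give an
$O(h)$ error, and this is the main obstacle. Changing variables via
(\ref{eq:LiftDerGamma}) reduces the difference to an integral against the
tangential matrix $P_\Gamma - DF_{h,\Gamma}^{-1} DF_{h,\Gamma}^{-T} JF_{h,\Gamma}$,
and the plan is to extract an extra power of $h$ by exploiting the
explicit structure (\ref{eq:DFhSurface}): since
$DF_{h,\Gamma}\xi = \xi + n\,(\xi\cdot\nabla_\Gamma\gamma_h) + \gamma_h\,\curvten\, \xi$,
with $\curvten$ symmetric and mapping $T_x\Gamma$ into itself, a direct
computation of the induced metric on $T_x\Gamma$ gives
\begin{equation*}
DF_{h,\Gamma}^T DF_{h,\Gamma} = P_\Gamma + \nabla_\Gamma \gamma_h \otimes \nabla_\Gamma \gamma_h + 2\gamma_h\,\curvten + O(h^4),
\end{equation*}
because the normal perturbation $n\,(\xi\cdot\nabla_\Gamma\gamma_h)$ only
enters quadratically and the cross terms with $\gamma_h \curvten$ vanish.
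By (\ref{eq:gammahest}) both $|\nabla_\Gamma \gamma_h|^2$ and $\gamma_h$ are
$O(h^2)$, so the induced metric is $P_\Gamma + O(h^2)$, its inverse is
$P_\Gamma + O(h^2)$, and multiplication by $JF_{h,\Gamma} = 1 + O(h^2)$
from (\ref{eq:Jacdetsurf}) preserves this order; a final Cauchy--Schwarz
delivers the first term. The essential point, without which the argument
fails, is the cancellation at the level of the metric of the leading
$O(h)$ normal perturbation, which crucially uses that
$\|\gamma_h\|_{L^\infty(\Gamma)} \lesssim h^2$ rather than merely $h$.
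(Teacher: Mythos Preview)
Your proof is correct and follows the same three--term decomposition as the paper. Term~$III$ (the coupling term) is handled identically, and Term~$I$ (the bulk term) differs only cosmetically: the paper writes the error matrix as $DF_hDF_h^T - JF_h I$ acting on $(\nabla v_B)^L$, while you write it as $I - DF_h^{-1}DF_h^{-T}JF_h$ acting on $\nabla v_B^L$; both reduce to the same estimate $DF_h = I + O(h)$ on $\mcU_\delta(\Gamma)$ via (\ref{eq:DFh}), (\ref{eq:gammahest}), (\ref{eq:Dchiest}) and the hypothesis $h\lesssim\delta$.

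The genuine difference is in the surface term. The paper works with the matrix $\mcA_{\Gamma,h}=DF_{h,\Gamma}DF_{h,\Gamma}^T - P_\Gammah^L JF_{h,\Gamma}$ acting on $(\nabla_\Gammah v_S)^L$: it first shows $DF_{h,\Gamma}=P_\Gammah^L P_\Gamma + O(h^2)$ by combining the estimate $\|P_\Gammah^L n\|_{L^\infty(\Gamma)}\lesssim h$ (which uses (\ref{eq:geomassumptionb})) with $\|\nabla_\Gamma\gamma_h\|_{L^\infty(\Gamma)}\lesssim h$, and then invokes the projector identity $P_\Gammah^L P_\Gamma P_\Gammah^L - P_\Gammah^L = P_\Gammah^L(P_\Gamma-P_\Gammah^L)^2P_\Gammah^L$ together with $\|P_\Gamma-P_\Gammah^L\|_{L^\infty(\Gamma)}\lesssim h$. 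You instead compute the pullback metric $DF_{h,\Gamma}^TDF_{h,\Gamma}$ directly on $T_x\Gamma$ from (\ref{eq:DFhSurface}), using (\ref{eq:Fhtan}) to drop the projector $P_\Gammah^L$, and observe that the leading $O(h)$ perturbation $n\,(\xi\cdot\nabla_\Gamma\gamma_h)$ is normal to $T_x\Gamma$ and hence contributes only quadratically, leaving the $O(h^2)$ terms $\nabla_\Gamma\gamma_h\otimes\nabla_\Gamma\gamma_h$ and $2\gamma_h\curvten$. Your route is a bit shorter and more intrinsically geometric, bypassing the separate estimate on $P_\Gammah^L n$ and the projector algebra; the paper's route makes the role of the normal approximation (\ref{eq:geomassumptionb}) explicit at each step, which can be convenient when tracking constants or adapting to other surface discretizations.
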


\begin{proof} Using the definition of the bilinear forms we have
\begin{align}
a(v^L,w^L)- a_h(v,w)= {} &\underbrace{a_B(v_B^L,w_B^L) - a_{B,h}(v_B,w_B)}_{I}
\nonumber \\ 
& + \underbrace{a_S(v_S^L,w_S^L)-a_{S,h}(v_S,w_S) }_{II} + \underbrace{a_{BS}(v,w)-a_{BS,h}(v,w)  }_{III} 
\nonumber \\
={} &I + II + III
\end{align}
We now proceed with estimates of the three terms.

\paragraph{Term $\bfI$.} Starting from the definition of the forms 
(\ref{eq:contformsa}) and (\ref{eq:discformsa}), changing domain 
of integration to $\Omega$, and using (\ref{eq:LiftDer}), we obtain
the following identity
\begin{align}\nonumber
&({b_B} k_B)^{-1} (a_B(v_B^L,w_B^L) - a_{B,h} (v_B, w_B)) 
\nonumber \\
&\qquad = 
( DF_h^T (\nabla v_B)^L, DF_h^T (\nabla w_B)^L )_\Omega
- (\nabla v_B, \nabla w_B )_{\Omega_h}
\nonumber  \\
&\qquad= ( DF_h^T (\nabla v_B)^L, DF_h^T (\nabla w_B)^L )_\Omega
-((\nabla v_B)^L, (\nabla w_B)^L JF_{h} )_{\Omega}
\nonumber  \\
&\qquad= ( ( DF_h DF_h^T- JF_{h} I) (\nabla v_B)^L,  (\nabla w_B)^L )_\Omega
\nonumber  \\
&\qquad=(\mcA_{h,\Omega} (\nabla v_B)^L,(\nabla w_B)^L )_\Omega
\end{align}

In order to estimate 
$\mcA_{h,\Omega}=DF_h DF_h^T- JF_{h} I$ we note that 
$\mcA_{h,\Omega} = 0$ in 
$\Omega_0 \setminus \mcU_\delta(\Gamma)$ and in $\mcU_\delta(\Gamma)$ 
we have the identity
\begin{align}
\mcA_{h,\Omega} &= DF_h DF_h^T- JF_{h} I 
\nonumber  \\
&= (DF_h - I ) ( DF_h - I )^T + (DF_h + DF_h^T) - I - JF_{h} I
 \nonumber  \\
&= (DF_h - I ) ( DF_h - I )^T + (DF_h-I) + (DF_h-I)^T + (1 - JF_{h}) I
\end{align}
and therefore we have the estimate
\begin{align}
\|\mcA_{h,\Omega}\|_{L^\infty(\mcU_\delta(\Gamma)\cap \Omega)} 
&\lesssim 
\|DF_h - I \|^2_{L^\infty(\mcU_\delta(\Gamma)\cap \Omega)} 
\\ \nonumber
&\qquad + \|DF_h - I \|_{L^\infty(\mcU_\delta(\Gamma)\cap \Omega)} 
+ \| 1 - JF_{h} \|_{L^\infty(\mcU_\delta(\Gamma)\cap \Omega)}
\end{align}
This estimate holds for any $0<\delta\leq \delta_0$ and $h$ 
such that 
\begin{equation}\label{eq:defFhdelta}
\Gamma_h \subset \mcU_{\delta/3}(\Gamma)
\end{equation}
Recall that (\ref{eq:defFhdelta}) is required in the 
definition (\ref{eq:defFh}) of the mapping $F_h$. Now using 
the assumption that there is a constant $C_1>0$ 
such that $C_1 h < \delta \leq \delta_0$, there is a constant 
$h_0>0$, independent of $\delta$, such that (\ref{eq:defFhdelta}) 
holds for $0<h\leq h_0$, since we have the estimate 
$\|\gamma_h \|_{L^\infty(\Gamma)} \leq  C_2 h^2 \leq (C_2 h_0) h 
< C_1 h/3 < \delta/3$, where we may choose $h_0$ such that 
$C_2 h_0 < C_1/3$. 

Proceeding with the estimate of $\|DF_h - I \|_{L^\infty(\mcU_\delta(\Gamma)\cap \Omega)}$ 
for $C_1 h < \delta \leq \delta_0$ and $0<h\leq h_0$ we start 
from the identity (\ref{eq:DFh}) and then using the estimates 
$\|\chi\|_{L^\infty(-\delta,\delta)}=1, 0<\delta \leq \delta_0$ 
and $\|P_\Gamma^e - \rho \curvten\|_{L^\infty(\mcU_{\delta_0}(\Gamma))} 
\lesssim 1$ we obtain
\begin{align}\nonumber
 \|DF_h - I \|_{L^\infty(\mcU_\delta(\Gamma)\cap \Omega)}
&\lesssim \|\nabla_\Gamma \gamma_h\|_{L^\infty(\Gamma)}
\\ \nonumber
&\qquad + \|\gamma_h\|_{L^\infty(\Gamma)}\|D \chi\|_{L^\infty(-\delta,\delta)} 
+ \|\gamma_h\|_{L^\infty(\Gamma)} 
\\ \nonumber
&\lesssim h + h^2 \delta^{-1} + h^2
\\ \label{eq:estDFh}
&\lesssim h 
\end{align}
where we used (\ref{eq:gammahest}) and (\ref{eq:Dchiest}) and 
$C_1 h < \delta$. This estimate 
holds for all $\delta$ and $h$ such that 
$C_1 h < \delta \leq \delta_0$ and $0<h \leq h_0$. 
Combining (\ref{eq:estDFh}) with the estimate for the Jacobian determinant 
(\ref{eq:Jacdetbulk}) we obtain the estimate 
\begin{equation}\label{eq:estAhOmega}
\| \mcA_{h,\Omega} \|_{L^\infty(\mcU_\delta(\Gamma)\cap \Omega)}
\lesssim h
\end{equation}
and we also recall that
\begin{equation}
\mcA_{h,\Omega} = 0 \quad \text{in $\Omega \setminus \mcU_\delta (\Gamma)$}
\end{equation}

Using the bound (\ref{eq:estAhOmega}) for $\mcA_{h,\Omega}$ we obtain 
the estimate
\begin{align}
|a_B(v^L,w^L) - a_{B,h} (v, w)|
&\lesssim h \| (\nabla v)^L \|_{L^2(\mcU_\delta(\Gamma))}
\| (\nabla w)^L \|_{{L^2(\mcU_\delta(\Gamma))}}
\nonumber \\
&\lesssim h \| \nabla v^L \|_{L^2(\mcU_\delta(\Gamma))}
\| \nabla w^L \|_{L^2(\mcU_\delta(\Gamma))}
\end{align}
At last we used the estimate 
\begin{align}
\| (\nabla v)^L \|_{L^2(\mcU_\delta(\Gamma))} 
= {} &
\| DF_h^{-T} (\nabla v^L) \|_{L^2(\mcU_\delta(\Gamma))}
\\ \nonumber
={} &
\| DF_h^{-T} \|_{L^\infty(\mcU_\delta(\Gamma))} \|\nabla v^L \|_{L^2(\mcU_\delta(\Gamma))}
\lesssim 
\| \nabla v^L \|_{L^2(\mcU_\delta(\Gamma))}
\end{align}
where we employed (\ref{eq:DFhbounds}).

\paragraph{Term $\bfI\bfI$.} Proceeding in the same way and using 
(\ref{eq:LiftDerGamma}) we obtain
\begin{align}\nonumber
&({b_S} k_S)^{-1} \Big(a_S(v_S^L,w_S^L) - a_{S,h} (v_S, w_S)\Big) 
\nonumber \\
&\qquad = 
( \nabla_\Gamma v_S^L, \nabla_\Gamma w_S^L )_\Gamma
- (\nabla_\Gammah v_S, \nabla_\Gammah w_S )_{\Gamma_h}
\nonumber \\
&\qquad = 
( DF_{h,\Gamma}^T (\nabla_\Gammah v_S)^L, DF_{h,\Gamma}^T (\nabla_\Gammah w_S)^L )_\Gamma
 - ((\nabla_\Gammah v_S)^L, (\nabla_\Gammah w_S)^L JF_{h,\Gamma})_{\Gamma}
\nonumber \\
&\qquad = 
( ( DF_{h,\Gamma} DF_{h,\Gamma}^T - P_\Gammah^L JF_{h,\Gamma} ) 
(\nabla_\Gammah v_S)^L, 
(\nabla_\Gammah w_S)^L )_\Gamma
\nonumber\\ 
&\qquad = (\mcA_{\Gamma,h} (\nabla_\Gammah v_S)^L,(\nabla_\Gammah w_S)^L )_\Gamma
\end{align}
where we introduced
\begin{align}\label{eq:AGammah}
\mcA_{\Gamma,h} = DF_{h,\Gamma} DF_{h,\Gamma}^T - P_\Gammah^L JF_{h,\Gamma}
\end{align}
Using the definition (\ref{eq:DFhGamma}) of $DF_{h,\Gamma}$ and the expression (\ref{eq:DFhSurface}) for $DF_h$ we have the identity
\begin{align} 
DF_{h,\Gamma} &=P_\Gammah^{{L}} DF_h P_\Gamma
\nonumber \\
 &= P_\Gammah^{{L}}(I + n \otimes \nabla_\Gamma \gamma_h 
+ \gamma_h \curvten) P_\Gamma
\nonumber \\
&= P_\Gammah^{{L}} P_\Gamma + (P_\Gammah^{{L}} n) \otimes \nabla_\Gamma \gamma_h 
+ \gamma_h P_\Gammah^{{L}} \curvten P_\Gamma
\end{align}
Here the second term can be estimated as follows
\begin{equation}\label{eq:normal_outerprod}
\|(P_\Gammah^{{L}} n) \otimes \nabla_\Gamma \gamma_h\|_{L^{\infty}(\Gamma)}
\lesssim
\|P_\Gammah^{{L}} n\|_{L^{\infty}(\Gamma)} 
\| \nabla_\Gamma \gamma_h\|_{L^{\infty}(\Gamma)}
\lesssim h^2
\end{equation}
where we used the estimate
\begin{equation}\label{est:PGammahn}
\|P_\Gammah^L n\|_{L^{\infty}(\Gamma)} = \|P_\Gammah^L (n -
n^L_h)\|_{L^{\infty}(\Gamma)} \lesssim \|n \circ p -
n_h\|_{L^\infty(\Gamma_h)} \lesssim h
\end{equation}
For the third term we have the estimate
\begin{equation}
\|\gamma_h P_\Gammah^{{L}} \curvten P_\Gamma\|_{L^\infty(\Gamma)} 
\lesssim 
\| \gamma_h \|_{L^\infty(\Gamma)} \|P_\Gammah^{{L}}\|_{L^\infty(\Gamma)}
\|\curvten\|_{L^\infty(\Gamma)}
\|P_\Gamma\|_{L^\infty(\Gamma)}
\lesssim h^2
\end{equation}
Thus we conclude that 
\begin{equation}
DF_{h,\Gamma}=
P_\Gammah^{{L}} P_\Gamma + O(h^2)
\end{equation}
Inserting this identity into the expression (\ref{eq:AGammah}) for $\mcA_{\Gamma,h}$ and using the identity 
\begin{equation}
P_\Gammah^L JF_{h,\Gamma} = P_\Gammah^L + P_\Gammah^L (JF_{h,\Gamma}-1) 
= P_\Gammah^L + O(h^2)
\end{equation}
where we used (\ref{eq:Jacdetsurf}), we obtain
\begin{align}
\mcA_{\Gamma,h} 
&= P_\Gammah^{{L}} P_\Gamma P_\Gammah^{{L}} - P_\Gammah^{{L}} + O(h^2)
\end{align}
Now the following identity holds
\begin{equation}
P_\Gammah^{{L}} P_\Gamma P_\Gammah^{{L}} - P_\Gammah^{{L}} 
= P_\Gammah^{{L}} ( P_\Gamma - P_\Gammah^{{L}}) ( P_\Gamma - P_\Gammah^{{L}}) P_\Gammah^{{L}}
\end{equation}
which leads to the estimate
\begin{equation}\label{eq:projgam_gamh}
\|P_\Gammah^{{L}} P_\Gamma P_\Gammah^{{L}} - P_\Gammah^{{L}} \|_{L^\infty(\Gamma)} 
\leq \|P_\Gammah^{{L}}\|^2_{L^\infty(\Gamma)} \| P_\Gamma - P_\Gammah^{{L}}\|^2_{L^\infty(\Gamma)} 
\lesssim h^2
\end{equation}
where we used the bound
\begin{align}\nonumber
\| P_\Gamma - P_\Gammah^L\|_{L^\infty(\Gamma)} = {} & \|  n \otimes n -
n_h^L \otimes n_h^L\|_{L^\infty(\Gamma)} 
\\ \nonumber
 \lesssim {} & \|(n - n_h^L)
\otimes n\|_{L^\infty(\Gamma)}  + \|n_h^L \otimes (n -
n_h^L)\|_{L^\infty(\Gamma)} 
\\
\nonumber
 \lesssim {} & \|n^e -n_h\|_{L^\infty(\Gamma_h)} 
\\
\nonumber
\lesssim {}& h
\end{align}
Thus we finally arrive at 
\begin{equation}
\|\mcA_{\Gamma,h} \|_{L^\infty(\Gamma)} \lesssim h^2
\end{equation}
and therefore we have the estimate
\begin{align}
|a_S(v^L,w^L) - a_{S,h} (v, w)|
&\lesssim h^2
\|(\nabla_\Gammah v)^L\|_{L^2(\Gamma)} 
\|(\nabla_\Gammah w)^L \|_{L^2(\Gamma)}
\nonumber \\
&\lesssim h^2
\|\nablag v^L\|_{L^2(\Gamma)} \|\nablag w^L \|_{L^2(\Gamma)}
\end{align}
where at last we used (\ref{eq:DFhboundsgamma}).

\paragraph{Term $\bfI \bfI \bfI$.}
We have
\begin{align}
a_{BS}(v^L,w^L) - a_{BS,h}(v,w) 
&= 
(b \cdot v^L, b\cdot w^L )_\Gamma - (b \cdot v , b \cdot w )_\Gammah
\nonumber \\
&=((1 - JF_{\Gamma,h})b \cdot v^L , b\cdot w^L )_\Gamma
\end{align}
and thus we obtain the estimate
\begin{equation}
| a_{BS}(v^L,w^L) - a_{BS,h}(v,w)| 
\lesssim h^2 \| b\cdot v^L \|_{L^2(\Gamma)}\| b\cdot w^L \|_{L^2(\Gamma)}
\end{equation}
\end{proof}
\begin{lem}\label{lem:quadl} If $f_{h} = (f_{B,h},f_{S,h})$ satisfies 
the estimate
 \begin{equation}
 \|f_B - f_{B,h}^L \|_{L^2(\Omega)} + \|f_S - f_{S,h}^L \|_{L^2(\Gamma)}
 \lesssim h^2
 \end{equation}
Then it holds
\begin{equation}
|l( v^L) - l_h(v)| \lesssim h^2 \| v^L \|_{L^2(\Omega) \times L^2(\Gamma)}  
\quad \forall v \in W_h
\end{equation}
\end{lem}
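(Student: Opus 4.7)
The plan is to split $l(v^L) - l_h(v)$ into its bulk and surface contributions, namely $b_B\bigl[(f_B, v_B^L)_\Omega - (f_{B,h}, v_B)_{\Omega_h}\bigr]$ and $b_S\bigl[(f_S, v_S^L)_\Gamma - (f_{S,h}, v_S)_{\Gamma_h}\bigr]$, and treat each separately. I would pull each discrete integral back to the exact geometry via the bijection $F_h$ using the measure relations $d\Omega_h = JF_h\,d\Omega$ and $d\Gamma_h = JF_{h,\Gamma}\,d\Gamma$; the bulk bracket then becomes $b_B\bigl(f_B - f_{B,h}^L\,JF_h,\,v_B^L\bigr)_\Omega$ and the surface bracket becomes $b_S\bigl(f_S - f_{S,h}^L\,JF_{h,\Gamma},\,v_S^L\bigr)_\Gamma$. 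In each integrand I would then decompose
\[
f - f_h^L\,JF \;=\; (f - f_h^L)\,JF \;+\; f\,(1 - JF),
\]
separating a consistency piece (where the hypothesis applies) from a purely geometric Jacobian-defect piece.

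For the consistency piece, Cauchy-Schwarz combined with the hypothesis $\|f - f_h^L\|_{L^2}\lesssim h^2$ and the uniform bounds $\|JF_h\|_{L^\infty(\Omega)}\lesssim 1$, $\|JF_{h,\Gamma}\|_{L^\infty(\Gamma)}\lesssim 1$ from \eqref{eq:Jacdetbulk} and \eqref{eq:Jacdetsurf} immediately delivers $h^2\|v^L\|_{L^2}$ on both the bulk and the surface side. For the surface Jacobian-defect piece, the sharp bound $\|1 - JF_{h,\Gamma}\|_{L^\infty(\Gamma)}\lesssim h^2$ from \eqref{eq:Jacdetsurf}, combined with the essential boundedness of $f_S$, yields the required $h^2\|v_S^L\|_{L^2(\Gamma)}$ bound directly. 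For the bulk Jacobian-defect piece I would use that $1 - JF_h$ is supported in $\mcU_\delta(\Gamma)$ (since $F_h = I$ elsewhere) and combine $\|1 - JF_h\|_{L^\infty(\mcU_\delta)}\lesssim h$ from \eqref{eq:Jacdetbulk} with the strip-width estimate $|\mcU_\delta(\Gamma)\cap\Omega|\lesssim\delta$ and the $L^\infty$-boundedness of $f_B$.

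The delicate point, and what I expect to be the main obstacle, is recovering the sharp $h^2$ rate on the bulk Jacobian-defect term: a naive $L^\infty\times L^2\times L^2$ estimate on the $O(h)$-wide tube only yields $O(h^{3/2})$. To obtain the full $h^2$ I would exploit the layered structure of $F_h$ visible in \eqref{eq:DFh}. Inside the inner subtube $|\rho|<\delta/3$ one has $\chi\equiv 1$ and $D\chi\equiv 0$, so the $O(h^{-1})$ cutoff-derivative contribution is absent and $1 - JF_h$ is already $O(h^2)$ there, while the residual $O(h)$ behaviour is confined to the thin transition annulus $\delta/3<|\rho|<2\delta/3$. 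Weighting each subregion against $\|v_B^L\|_{L^2}$ with the correct layer-width factors and using the hypothesis $\delta\sim h$ then gives the required $h^2\|v_B^L\|_{L^2(\Omega)}$ bound, which combined with the surface estimate completes the proof.
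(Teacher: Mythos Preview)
Your decomposition into bulk and surface contributions, the pull-back to the exact geometry via $F_h$, and the further splitting $f - f_h^L\,JF = (f - f_h^L)\,JF + f\,(1 - JF)$ is exactly what the paper does. The paper's own proof is extremely brief---it writes down the pulled-back expression $b_B(f_B-f_{B,h}^L JF_h,v_B^L)_\Omega+b_S(f_S-f_{S,h}^L JF_{h,\Gamma},v_S^L)_\Gamma$ and then simply asserts ``which immediately leads to the estimate''---so on the surface and consistency pieces, and on the overall structure, you are aligned with it and in fact give more detail.

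You are right to flag the bulk Jacobian-defect term as the delicate point; the paper does not isolate it. Your layered analysis, however, does not close the gap to $h^2$. The observation that $1-JF_h=O(h^2)$ on the inner subtube $|\rho|<\delta/3$ is correct (there $D\chi=0$, and the surviving $O(h)$ rank-one part $n^e\otimes(\cdots)$ of $DF_h-I$ has vanishing trace). But on the transition annulus $\delta/3<|\rho|<2\delta/3$ one has $1-JF_h=\gamma_h^e(D\chi)(\rho)+O(h^2)=O(h^2/\delta)=O(h)$ when $\delta\sim h$, and that annulus has width $\sim\delta\sim h$---the \emph{same} width as the inner subtube. Hence the only layer-width gain available is a single factor $h^{1/2}$, from $\|f_B\|_{L^2(\text{annulus})}\lesssim h^{1/2}\|f_B\|_{L^\infty}$; you cannot extract a second $h^{1/2}$ from $\|v_B^L\|_{L^2(\text{annulus})}$ for arbitrary $v\in W_h$, since $v_B$ may concentrate near $\Gamma$. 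The transition-annulus contribution is therefore at best
\[
\|1-JF_h\|_{L^\infty(\text{annulus})}\,\|f_B\|_{L^2(\text{annulus})}\,\|v_B^L\|_{L^2(\Omega)}
\;\lesssim\; h\cdot h^{1/2}\,\|v_B^L\|_{L^2(\Omega)}
\;=\; h^{3/2}\,\|v_B^L\|_{L^2(\Omega)},
\]
which is exactly the order you said the naive argument gives; the layering buys nothing additional under $\delta\sim h$. (Choosing $\delta$ of order one would make $1-JF_h=O(h^2)$ throughout and rescue this bound, but that choice is incompatible with the use of Lemma~\ref{lem:quada} in the $L^2$ duality argument, which relies on $\delta\sim h$.) One clean repair is to replace $\|v_B^L\|_{L^2(\Omega)}$ on the right-hand side by an $H^1(\Omega)$-type norm: the tube Poincar\'e inequality $\|v_B^L\|_{L^2(\mcU_\delta(\Gamma)\cap\Omega)}\lesssim\delta^{1/2}\|v_B^L\|_{H^1}$ then supplies the missing $h^{1/2}$, and this stronger norm is still controlled in both places where the lemma is invoked (Strang's lemma via $\tn v^L\tn$, and Term~$II$ of the duality argument via $\|\pi_h^L\phi\|_{H^1}\lesssim\|\psi\|$).
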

\begin{proof}
We have
\begin{align}
l(v^L) - l_h (v) &= b_B(f_B,v_B^L)_\Omega - b_B(f_{B,h}, v_B)_{\Omega_h}
+ b_S(f_S,v_S^L)_\Gamma - b_S(f_{S,h}, v_S)_{\Gamma_h}
\nonumber \\
&=b_B(f_B - f_{B,h}^L JF_h, v^L_B)_\Omega + b_S(f_S - f_{S,h}^L JF_h, v^L_S)_\Gamma
\end{align}
which immediately leads to the estimate
\begin{equation}
|l(v^L) - l_h (v)| \lesssim h^2\| v^L \|_{L^2(\Omega)\times L^2(\Gamma)}
\end{equation}
\end{proof}

\subsection{Error Estimates}
\begin{thm}\label{thmenergy} The following error estimate holds
\begin{equation}\label{eq:energyest}
\Big( \tn u - u_h^L \tn^2 + \tn u - u_h \tn^2_\mcF \Big)^{1/2}\lesssim h \| u \|_{H^2(\Omega)\times H^2(\Gamma)}
\end{equation}
for small enough mesh parameter $h$.
\end{thm}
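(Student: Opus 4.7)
The plan is to feed Strang's lemma (Lemma 4.3) with the interpolation bounds (Lemmas 4.1 and 4.2) and the quadrature bounds (Lemmas 4.4 and 4.5), and then absorb an unavoidable term containing the error itself into the left-hand side for $h$ sufficiently small. Write $E := \bigl(\tn u - u_h^L\tn^2 + \tn u - u_h\tn_\mcF^2\bigr)^{1/2}$. Strang's lemma reduces the task to estimating the interpolation residuals and the two consistency suprema
\[
S_a := \sup_{v\in W_h}\frac{a(u_h^L,v^L)-a_h(u_h,v)}{\tn v^L\tn},
\qquad
S_l := \sup_{v\in W_h}\frac{l(v^L)-l_h(v)}{\tn v^L\tn}.
\]
The interpolation residuals are already $\lesssim h\,\|u\|_{H^2(\Omega)\times H^2(\Gamma)}$ by Lemmas 4.1 and 4.2, so only $S_a$ and $S_l$ remain.

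For $S_a$ I would apply Lemma 4.4 with $w=u_h$. The right-hand side then involves $u_h^L$, which I would decompose as $u_h^L=u+(u_h^L-u)$. Using $h^2\le h$ and the trivial inclusion $\mcU_\delta(\Gamma)\cap\Omega\subset\Omega$, the three terms on the right of Lemma 4.4 combine into the single bound
\[
|a(u_h^L,v^L)-a_h(u_h,v)|\lesssim h\bigl(\tn u\tn + \tn u-u_h^L\tn\bigr)\tn v^L\tn.
\]
By coercivity and continuity of $a(\cdot,\cdot)$, $\tn u\tn\lesssim\|u\|_{H^2(\Omega)\times H^2(\Gamma)}$, so
\[
S_a \lesssim h\,\|u\|_{H^2(\Omega)\times H^2(\Gamma)} + h\,E.
\]

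For $S_l$ I would invoke Lemma 4.5 and then convert the $L^2$ norm on the right into the energy norm. For $v_B^L\in H^1(\Omega)$ a standard Poincar\'e-trace inequality gives $\|v_B^L\|_{L^2(\Omega)}\lesssim \|\nabla v_B^L\|_{L^2(\Omega)}+\|v_B^L\|_{L^2(\Gamma)}$; since $v_S\in V_{S,h}$ satisfies $\int_{\Gamma_h}v_S=0$, after lifting we obtain $\|v_S^L\|_{L^2(\Gamma)}\lesssim \|\nabla_\Gamma v_S^L\|_{L^2(\Gamma)}$ up to an $O(h^2)$ mean correction from $JF_{h,\Gamma}$, and then $\|v_B^L\|_{L^2(\Gamma)}$ is controlled by $\|b_Bv_B^L-b_Sv_S^L\|_{L^2(\Gamma)}+\|v_S^L\|_{L^2(\Gamma)}$. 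Combining,
\[
\|v^L\|_{L^2(\Omega)\times L^2(\Gamma)}\lesssim \tn v^L\tn,
\]
so Lemma 4.5 yields $S_l\lesssim h^2\,\tn v^L\tn\lesssim h$ after dividing.

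Assembling the pieces via Strang's lemma gives
\[
E \lesssim h\,\|u\|_{H^2(\Omega)\times H^2(\Gamma)} + h\,E,
\]
and for $h$ small enough the last term is absorbed into the left-hand side, yielding \eqref{eq:energyest}. The main obstacle is the $h\,E$ contribution produced by the bilinear-form quadrature error after splitting $u_h^L = u + (u_h^L-u)$: it is crucial that Lemma 4.4 delivers only a \emph{first-order} factor in $h$ (the surface contributions are $O(h^2)$, and the bulk $O(h)$ factor multiplies only gradients on the tubular neighborhood, where the full $\Omega$-gradient can be used as an upper bound), so that after absorption we retain exactly the expected $O(h)$ convergence rate.
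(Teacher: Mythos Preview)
Your argument is correct and follows the same Strang--lemma--plus--quadrature--plus--interpolation template as the paper, but you diverge from the paper in how you control the discrete solution appearing in $S_a$. The paper bounds $\tn u_h^L\tn$ directly by the data via an a~priori stability estimate $\tn u_h^L\tn\lesssim\|f\|_{L^2(\Omega)\times L^2(\Gamma)}$, so no absorption step is needed and the result follows in one line. You instead split $u_h^L=u+(u_h^L-u)$, produce a term $h\,E$ on the right, and absorb it for small $h$. Both routes are valid; yours is self-contained (it does not appeal to a separate discrete stability bound), while the paper's is slightly cleaner because nothing has to be moved across the inequality. Your treatment of $S_l$---showing $\|v^L\|_{L^2(\Omega)\times L^2(\Gamma)}\lesssim\tn v^L\tn$ via Poincar\'e on $\Gamma$ (using $\int_{\Gamma_h}v_S=0$ and the $O(h^2)$ bound on $1-JF_{h,\Gamma}$) together with a Friedrichs-type bound for $v_B^L$---in fact spells out a step the paper leaves implicit when it writes ``$+\,h^2$'' for the linear-form consistency term.
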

\begin{proof} Using the Strang Lemma, Lemma \ref{lem:strang}, in 
combination with the quadrature error estimates in Lemma \ref{lem:quada} 
and \ref{lem:quadl}, we obtain
\begin{align}
\Big( \tn u - u_h^L \tn^2 + \tn u - u_h \tn^2_\mcF \Big)^{1/2} 
\lesssim {} & \Big( \tn u - \pi^L_h u  \tn^2 +  \tn u - \pi_h u \tn_{\mcF}^2 \Big)^{1/2}
\nonumber \\ 
&+ \sup_{v \in W_h} \frac{a(u_h^L,v^L) - a_h(u_h,v)}{\tn v^L \tn}\nonumber
+ \sup_{v \in W_h} \frac{l( v^L) - l_h(v)}{\tn v^L \tn}
\nonumber \\
\lesssim {} &
\Big( \tn u - \pi_h^L u \tn^2 + \tn u - \pi_h u \tn^2_\mcF \Big)^{1/2} 
+ h \tn u^L_h \tn  + h^2 
\nonumber \\
\lesssim {} & h
\end{align}
Here we used the interpolation error estimates in Lemma \ref{lem:approx} 
and Lemma \ref{lem:approxface}, and the stability estimate
\begin{equation}\label{coerc_stab}
\tn u_h^L \tn \lesssim \|f\|_{L^2(\Omega)\times L^2(\Gamma)}
\end{equation} 
in the last inequality.
\end{proof}
\begin{thm} The following error estimate holds
\begin{equation}\label{eq:L2est}
\| u - u_h^L \|_{L^2(\Omega)\times L^2(\Gamma)} \lesssim  h^2 
\| u \|_{H^2(\Omega) \times H^2(\Gamma)}
\end{equation}
for small enough mesh parameter $h$.
\end{thm}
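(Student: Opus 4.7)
The plan is to run an Aubin--Nitsche duality argument, using the already-established energy estimate \eqref{eq:energyest} to control the terms that arise from the non-standard Galerkin orthogonality. Introduce the adjoint problem: let $e = u - u_h^L$ and define $\phi = (\phi_B,\phi_S) \in W$ by
\begin{equation*}
a(v,\phi) = (e_B,v_B)_\Omega + (e_S,v_S)_\Gamma \qquad \forall v \in W.
\end{equation*}
By symmetry of $a$ and the elliptic regularity quoted for the primal problem we have $\|\phi\|_{H^2(\Omega)\times H^2(\Gamma)} \lesssim \|e\|_{L^2(\Omega)\times L^2(\Gamma)}$.

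Next I would take $v = e$ and split with the interpolant:
\begin{equation*}
\|e\|_{L^2(\Omega)\times L^2(\Gamma)}^2 = a(e,\phi) = a(e,\phi - \pi_h^L\phi) + a(e,\pi_h^L\phi).
\end{equation*}
The first summand is handled by Cauchy--Schwarz in the energy norm together with Lemma \ref{lem:approx} applied to $\phi$ and Theorem \ref{thmenergy} applied to $u$, yielding $\lesssim h^2 \|u\|_{H^2(\Omega)\times H^2(\Gamma)} \|e\|_{L^2}$. For the second summand I would use $a(u,\pi_h^L\phi) = l(\pi_h^L\phi)$ and the discrete equation $a_h(u_h,\pi_h\phi) + j_h(u_h,\pi_h\phi) = l_h(\pi_h\phi)$ to rewrite it as
\begin{equation*}
a(e,\pi_h^L\phi) = \bigl(l(\pi_h^L\phi) - l_h(\pi_h\phi)\bigr) + \bigl(a_h(u_h,\pi_h\phi) - a(u_h^L,\pi_h^L\phi)\bigr) + j_h(u_h,\pi_h\phi),
\end{equation*}
so that each contribution can be attacked with Lemmas \ref{lem:quada} and \ref{lem:quadl}. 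The load-functional gap is immediately $O(h^2)\|\phi\|_{H^2} \lesssim h^2\|e\|_{L^2}$ from Lemma \ref{lem:quadl}, and the stabilization term splits as $j_h(u_h,\pi_h\phi) \lesssim \tn u_h \tn_{\mcF}\, \tn \pi_h\phi\tn_{\mcF}$, each factor $\lesssim h$ by Theorem \ref{thmenergy} combined with Lemma \ref{lem:approxface} (using that the extended smooth functions $u^L,\phi^L$ have zero normal jumps on interior faces).

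The main obstacle is the bilinear-form consistency bracket, and in particular its bulk quadrature contribution, which Lemma \ref{lem:quada} only controls at first order: $h\,\|\nabla u_h^L\|_{L^2(\mcU_\delta(\Gamma)\cap\Omega)} \|\nabla \pi_h^L\phi\|_{L^2(\mcU_\delta(\Gamma)\cap\Omega)}$. To upgrade this to $h^2$ I would exploit the fact, emphasized in the outline of the proof, that $\delta$ may be chosen as $\delta = Ch$. Using a standard strip estimate $\|\nabla v\|_{L^2(\mcU_\delta(\Gamma))} \lesssim \delta^{1/2} \|v\|_{H^2}$ valid for $v \in H^2$, and splitting $u_h^L = u + (u_h^L - u)$ and $\pi_h^L\phi = \phi + (\pi_h^L\phi - \phi)$, each of the two bulk-gradient factors is bounded by $(\delta^{1/2} + h)\|u\|_{H^2}$ and $(\delta^{1/2}+h)\|\phi\|_{H^2}$ respectively via Theorem \ref{thmenergy} and Lemma \ref{lem:approx}. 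With $\delta \sim h$ this yields $h \cdot h^{1/2} \cdot h^{1/2} \|u\|_{H^2}\|\phi\|_{H^2} \lesssim h^2 \|u\|_{H^2\times H^2}\|e\|_{L^2}$, while the surface quadrature terms in Lemma \ref{lem:quada} give $h^2$ directly. Collecting the estimates gives $\|e\|_{L^2}^2 \lesssim h^2\|u\|_{H^2\times H^2}\|e\|_{L^2}$, and dividing by $\|e\|_{L^2}$ yields \eqref{eq:L2est}.
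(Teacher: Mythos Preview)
Your proposal is correct and follows essentially the same Aubin--Nitsche duality argument as the paper: the same four-term decomposition (interpolation error against $\phi - \pi_h^L\phi$, load-functional quadrature gap via Lemma~\ref{lem:quadl}, bilinear-form quadrature gap via Lemma~\ref{lem:quada}, and the stabilization term handled via consistency of $j_h$ on smooth extensions), and in particular the same key idea of choosing $\delta \sim h$ and using the strip/Poincar\'e inequality $\|v\|_{L^2(\mcU_\delta(\Gamma)\cap\Omega)}\lesssim \delta^{1/2}\|v\|_{H^1(\mcU_{\delta_0}(\Gamma)\cap\Omega)}$ applied to $\nabla u_{B,h}^L$ and $\nabla \pi_{B,h}^L\phi_B$ (after splitting off the exact $u_B$ and $\phi_B$) to upgrade the $O(h)$ bulk quadrature term to $O(h^2)$. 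The only cosmetic differences are that you take $\psi = e$ directly rather than a generic $\psi$ with a supremum at the end, and you bound $j_h(u_h,\pi_h\phi)$ by $\tn u_h\tn_\mcF\,\tn\pi_h\phi\tn_\mcF$ and then invoke vanishing jumps of the smooth extensions, which is equivalent to the paper's rewriting $j_h(u_h,\pi_h\phi)=j_h(u-u_h,\phi-\pi_h\phi)$.
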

\begin{proof}
Let $\phi$ be the solution to the dual problem: find $\phi \in W$ such that 
\begin{equation}
a(v,\phi) = (v,\psi)_{L^2(\Omega) \times L^2(\Gamma)} \quad \forall v \in W
\end{equation}
where $\psi = (\psi_B,\psi_S) \in L^2(\Omega) \times 
L^2(\Gamma)$. Then we have the regularity estimate 
\begin{equation}
\| \phi \|_{H^2(\Omega) \times H^2(\Gamma)} \lesssim \|\psi\|_{L^2(\Omega)\times L^2(\Gamma)}
\end{equation}
Setting $v = u - u_h^L$, and adding and subtracting suitable terms 
we obtain
\begin{align}
(u_B - u_{B,h}^L, \psi_B )_{{\Omega}} 
+ {(u_S - u_{S,h}^L, \psi_S)_{\Gamma}} = {} & a(u - u_h^L,\phi)
\nonumber \\
= {} & a(u - u_h^L, \phi - \pi_h^L \phi ) 
+ a(u - u_h^L,\pi_h^L \phi )
\nonumber  \\
= {} & \underbrace{a(u - u_h^L,\phi - \pi_h^L \phi )}_{I}
+ \underbrace{\Big( l(\pi_h^L \phi ) - l_h(\pi_h \phi ) \Big) }_{II}
\nonumber \\ 
& + \underbrace{\Big( a_h(u_h, \pi_h \phi) 
- a(u_h^L,\pi_h^L \phi)\Big)}_{III}
+ \underbrace{ j_h(u_h ,\pi_h \phi )}_{IV}
\nonumber \\
={} & I + II + III + IV
\end{align}
\paragraph{Term $\bfI$.} Using Cauchy-Schwarz, the 
energy norm estimate (\ref{eq:energyest}), the interpolation 
estimate (\ref{eq:interpolest}) we obtain
\begin{equation}
|I|\leq \tn u - u_h^L\tn\,\tn \phi - \pi_h^L \phi \tn \lesssim h^2 { \|\psi\|_{L^2(\Omega)\times L^2(\Gamma)}}
\end{equation}
\paragraph{Term $\bfI\bfI$.} Using Lemma \ref{lem:quadl} we immediately get
\begin{equation}
|II|\lesssim h^2 { \|\psi\|_{L^2(\Omega)\times L^2(\Gamma)}}
\end{equation} 
\paragraph{Term $\bfI\bfI\bfI$.} Using Lemma \ref{lem:quada} we obtain
\begin{align}\nonumber
|a(u_h^L, \pi_h^L \phi) - a_h(u_h^L,\pi_h^L \phi)|
\lesssim {} & h^2 \| \nabla_\Gamma u_{S,h}^L \|_{L^2(\Gamma)} 
\| \nabla_\Gamma \pi_{S,h}^L \phi_{S} \|_{L^2(\Gamma)}
\\ \nonumber
& 
+ h^2 \|b\cdot u_h^L \|_{L^2(\Gamma)}\|b \cdot \pi_h^L\phi \|_{L^2(\Gamma)}
\\ 
&
+
h \| \nabla u_{B,h}^L \|_{L^2(\mcU_\delta (\Gamma)\cap\Omega)} 
\| \nabla \pi_{B,h}^L \phi_{B}  \|_{L^2(\mcU_\delta(\Gamma)\cap\Omega)} 
\end{align}
for $h \lesssim \delta \leq \delta_0$ and $h$ small enough. To show that 
the third term is actually of second order we shall use the Poincar\'e inequality 
\begin{equation}
\| v \|_{L^2(\mcU_\delta(\Gamma)\cap\Omega)} \lesssim (\delta/\delta_0)^{1/2} 
\| v \|_{H^1(\mcU_{\delta_0}(\Gamma)\cap\Omega)}, \quad 0<\delta \leq \delta_0
\end{equation}
See \cite{EllRan12} for a proof of this inequality. We proceed in the 
following way
\begin{align}
\| \nabla \pi_{B,h}^L \phi_{B}  \|_{L^2(\mcU_\delta(\Gamma)\cap\Omega)} 
&\leq
\| \nabla (\pi_{B,h}^L \phi_{B} - \phi_B) \|_{L^2(\mcU_{\delta}(\Gamma)\cap\Omega)} 
+
\| \nabla \phi_B \|_{L^2(\mcU_{\delta}(\Gamma))}
\nonumber \\
&\lesssim (h + \delta^{1/2}) \| \phi_B \|_{H^2(\mcU_{\delta_0}(\Gamma)\cap\Omega)}
\nonumber \\
&\lesssim (h + h^{1/2}) \| \phi_B \|_{H^2({\Omega})} \label{phibound}
\end{align}
where we used the fact that $\delta$ can actually be chosen such that 
$\delta \sim h$, see Lemma \ref{lem:quada}, and the interpolation error estimate 
(\ref{eq:interpolest}). The term $\| \nabla u_{B,h}^L \|_{L^2(\mcU_\delta(\Gamma))}$ can be estimated 
using the same technique but we employ the energy norm error estimate 
(\ref{eq:energyest}) instead
\begin{align}\nonumber
\| \nabla u_{B,h}^L \|_{L^2(\mcU_\delta(\Gamma)\cap\Omega)} 
&\lesssim \| \nabla (u_{B,h}^L - u_B)\|_{L^2(\mcU_\delta(\Gamma)\cap\Omega)} 
+ \| \nabla u_B\|_{L^2(\mcU_\delta(\Gamma)\cap \Omega)}
\\ \nonumber
&\lesssim \tn u - u_h \tn 
+ \delta^{1/2}\| \nabla u_B\|_{L^2(\mcU_\delta(\Gamma)\cap \Omega)}
\\ \label{ubound}
&\lesssim (h + h^{1/2} \| u \|_{H^2(\Omega)\times H^2(\Gamma)}
\end{align}
Combining (\ref{phibound}) and (\ref{ubound}) we obtain
\begin{equation}
III \lesssim (h^2 +  h(h + h^{1/2})^2) \|\psi\|_{L^2(\Omega)\times L^2(\Gamma)}\lesssim h^2 \|\psi\|_{L^2(\Omega)\times L^2(\Gamma)}
\end{equation}

\paragraph{Term $\bfI\bfV$.} Using the fact that the jump term is 
consistent we obtain
\begin{equation}
|IV|= |j_h(u - u_h,\phi - \pi_h \phi)|\leq \tn u - u_h\tn_\mcF 
\tn \phi - \pi_h \phi \tn_\mcF \lesssim h^2 {\|\psi\|_{L^2(\Omega)\times L^2(\Gamma)}}
\end{equation}
where we used the energy estimate in Theorem \ref{thmenergy} and the 
interpolation estimate in Lemma \ref{lem:approx}. 

We conclude the proof by collecting the estimates of Terms $I-IV$ 
and taking the supremum over all
$\psi$ such that $\|\psi\|_{L^2(\Omega)\times L^2(\Gamma)} = 1$.
\end{proof}

\section{Estimate of the Condition Number}

Due to the different dimensions of the two coupled differential equations 
at the surface we shall see that it is natural to precondition the system 
in such a way that we seek $(v_{B,h},v_{S,h})$ such that the solution $(u_{B,h},u_{S,h})$ of (\ref{eq:fem}) is given by 
\begin{equation}
(u_{B,h}, u_{S,h}) = (v_{B,h},h^{1/2} v_{S,h})
\end{equation}
The corresponding variational problem for $v_h=(v_{B,h},v_{S,h})$ takes 
the form: find $v = (v_B,v_S) \in W_h$ such that 
\begin{equation}
\tilde{A}_h(v,w) = \tilde{L}_h(w) \quad \forall w \in W_h 
\end{equation}
where the bilinear forms are defined by
\begin{equation}\label{eq:Atilde}
\tilde{A}_h(v,w) = A_h((v_B,h^{1/2} v_S), (w_B, h^{1/2} w_S)), 
\quad \tilde{L}_h(w) = L_h((w_B,h^{1/2} w_S))
\end{equation}
We shall now estimate the condition number of the stiffness matrix 
$\tilde{A}$ associated with the bilinear form $\tilde{A}_h(\cdot,\cdot)$. 
Let $\{\varphi_{B,i}\}_{i=1}^{N_B}$ and $\{\varphi_{S,i} \}_{i=1}^{N_S}$ 
be the standard piecewise linear basis functions in $V_{B,h}$ and
$V_{S,h} \oplus \langle 1_{\Gamma_h}\rangle$, respectively. Note that we 
have added the one dimensional space $\langle 1_{\Gamma_h}\rangle$ of constant functions on $\Gamma_h$. 
Define the following basis in the product space 
$V_{B,h} \times V_{S,h}\oplus \langle 1_{S,h}\rangle$: 
\begin{equation}
\varphi_i =
\begin{cases}
(\varphi_{B,i},0) & 1\leq i \leq N_B
\\
(0,\varphi_{S,i-N_B}) & 1 + N_B \leq i \leq N = N_B + N_S 
\end{cases}
\end{equation}
The expansion $v = \sum_{i=1}^N \widehat{v}_i \varphi_i$ defines an 
isomorphism 
\begin{align}
V_{B,h} \times V_{S,h}/\langle 1_{\Gamma_h} \rangle 
\oplus \langle 1_{S,h}\rangle  
&\rightarrow 
\bbR^{N_B} \times \bbR^{N_S} / \langle 1_{\bbR^{N_S}}\rangle \oplus 
\langle 1_{\bbR^{N_S}}\rangle
\\
(v_B, v_S \oplus \overline{v}_S 1_{S,h}) 
&\mapsto 
(\widehat{v}_B,\widehat{v}_S \oplus \overline{v}_S 1_{\bbR^{N_S}})  
\end{align}
where $v_S$ is the unique element in the equivalence classes of 
$V_{S,h}/\langle 1_{\Gamma_h} \rangle$ with 
$\int_{\Gamma_h} v_S =0$ and $\overline{v}_S=|\Gamma_h|^{-1} \int_{\Gamma_h} v_S$
is the meanvalue of $v_S$. 
If we introduce the mesh dependent $L^2$-norm
\begin{equation}
\| v \|_h^2 = \| v_B \|^2_{L^2(\mcN_{B,h})} + \| v_S \|^2_{L^2(\mcN_{S,h})} 
\end{equation}
where the sets $\mcN_{B,h}$ and $\mcN_{S,h}$ are defined in (\ref{eq:mcnmcs}),
we have the following standard estimate 
\begin{equation}\label{rneqv}
c h^{-d} \| v \|^2_h \lesssim | \widehat{v} |^2_N \lesssim C h^{-d} \| v \|^2_h 
\end{equation}

Let $\tilde{A}$ be the stiffness matrix with elements $a_{ij} 
= \tilde{A}_h(\varphi_i,\varphi_j) + J_h(\varphi_i,\varphi_j)$. The stiffness 
matrix is symmetric and has a one dimensional kernel consisting of a 
constant functions $v =(v_B,v_S)$, that satisfy $b\cdot v = b_B v_B - b_S v_S=0$. 
We shall estimate the condition number of $\tilde{A}$ as an operator on the 
invariant space $V=\bbR^{N_B} \times \bbR^{N_S}/\langle 1_{\bbR^{N_S}}\rangle$ 
defined by
\begin{equation}
\kappa(\tilde{A}) = | \tilde{A} |_V |\tilde{A}^{-1} |_V
\end{equation}
where $|x|^2_N = \sum_{i=1}^N x_i^2$ for $x \in \bbR^N$ and 
$|\tilde{A}|_V = \sup_{X\in V \setminus \{ 0\}} \frac{|\tilde{A} x|_N}{|x|_N}$ 
for $\tilde{A} \in \bbR^{N\times N}$. Next we introduce the discrete 
energy norm 
\begin{equation}
\tn v \tn_h^2 = A_h(v,v) = a_h(v,v) + j_h(v,v)
\end{equation}
The proof of the estimate of the condition number follow the 
approach presented in \cite{ErnGer06} and rely on a Poincar\'e  
and an inverse inequality which we prove next. 

\begin{lem} (Poincar\'e inequality) {Independently of the mesh/boundary
  intersection} it holds that
\begin{equation}\label{eq:poincarecond}
\| (v_B, v_S) \|_h \lesssim \tn ( v_B,h^{1/2} v_S) \tn_h
\quad \forall (v_B, v_S) \in W_h
\end{equation}
\end{lem}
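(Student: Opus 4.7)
The plan is to expand the right-hand side into its constituent contributions and then bound $\|(v_B,v_S)\|_h$ term by term. Explicitly, for $v=(v_B,v_S)\in W_h$,
\begin{align*}
\tn (v_B,h^{1/2}v_S)\tn_h^2 & = b_B k_B \|\nabla v_B\|^2_{L^2(\Omega_h)} + h\, b_S k_S \|\nablagh v_S\|^2_{L^2(\Gammah)} \\
& \quad + \|b_B v_B - b_S h^{1/2} v_S\|^2_{L^2(\Gammah)} + \tau_B h^3 j_B(v_B,v_B) + \tau_S h\, j_S(v_S,v_S),
\end{align*}
so I must control $\|v_B\|^2_{L^2(\mcN_{B,h})} + \|v_S\|^2_{L^2(\mcN_{S,h})}$ by these contributions, with a constant independent of how $\Gammah$ intersects the background mesh.

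Three standard ingredients drive the argument. First, a surface Poincar\'e inequality on $\Gammah$ for zero-mean functions, $\|v_S\|_{L^2(\Gammah)}\lesssim \|\nablagh v_S\|_{L^2(\Gammah)}$, which applies by the constraint $\int_{\Gammah} v_S=0$ built into $V_{S,h}$. Second, a Poincar\'e-type inequality on the discrete bulk with a boundary trace, namely $\|v_B\|^2_{L^2(\Omega_h)}\lesssim \|\nabla v_B\|^2_{L^2(\Omega_h)} + \|v_B\|^2_{L^2(\Gammah)}$, whose constant depends only on $\Omega$ through the geometric closeness of $\Omega_h$ to $\Omega$. Third, and crucial for robustness, the CutFEM extension inequalities from \cite{BurHan12,BurHanLar13},
\[
\|v_S\|^2_{L^2(\mcN_{S,h})} \lesssim h\|v_S\|^2_{L^2(\Gammah)} + h^3 j_S(v_S,v_S), \qquad
\|v_B\|^2_{L^2(\mcN_{B,h})} \lesssim \|v_B\|^2_{L^2(\Omega_h)} + h^3 j_B(v_B,v_B),
\]
whose constants are independent of the mesh/surface intersection. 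These encode the central role of the gradient-jump stabilization $j_h$.

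I would proceed in two steps. The surface part is handled first: combining the surface Poincar\'e bound with the surface extension inequality, and using $h^3\leq h$ for $h\leq 1$ to downgrade the stabilization scaling, yields
\[
\|v_S\|^2_{L^2(\mcN_{S,h})} \lesssim h\|\nablagh v_S\|^2_{L^2(\Gammah)} + h\, j_S(v_S,v_S),
\]
both terms being present in $\tn (v_B,h^{1/2}v_S)\tn_h^2$ up to the positive constants $b_S k_S$ and $\tau_S$. For the bulk part, the coupling contribution controls the boundary trace of $v_B$:
\[
\|v_B\|^2_{L^2(\Gammah)} \lesssim \|b_B v_B - b_S h^{1/2} v_S\|^2_{L^2(\Gammah)} + h\|v_S\|^2_{L^2(\Gammah)},
\]
and the second term is already dominated via the surface Poincar\'e step. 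Feeding the resulting bound on $\|v_B\|^2_{L^2(\Gammah)}$ into the Poincar\'e inequality with boundary trace controls $\|v_B\|^2_{L^2(\Omega_h)}$, and a final application of the bulk extension inequality bridges to $\|v_B\|^2_{L^2(\mcN_{B,h})}$ at the cost of the admissible term $\tau_B h^3 j_B(v_B,v_B)$. Summing the bulk and surface estimates proves the lemma.

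The main obstacle is the uniform-in-cut character of the extension inequalities: their constants must be insensitive to how $\Gammah$ slices the mesh, and this is precisely the purpose served by the gradient-jump stabilization. Once those are invoked as black boxes from the cited references, everything else reduces to a routine chaining of Poincar\'e inequalities and a triangle-inequality bookkeeping of the coupling term, with the assumption $h\leq 1$ used to absorb the extra $h^2$ factor on the surface side.
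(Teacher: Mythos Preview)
Your proposal is correct and follows essentially the same route as the paper: first control the surface part via the extension inequality from \cite{BurHanLar13} combined with the zero-mean Poincar\'e on $\Gammah$, then control $\|v_B\|_{L^2(\Gammah)}$ through the coupling term and pass from $\Omega_h$ to $\mcN_{B,h}$ using the bulk gradient-jump stabilization. The only cosmetic differences are that the paper organizes the bulk Poincar\'e step as a mean/fluctuation split $v_B = P_0 v_B + (I-P_0)v_B$ rather than invoking a Poincar\'e-with-trace inequality directly, and that the surface extension estimate from \cite{BurHanLar13} actually carries $h\,j_S$ rather than $h^3 j_S$ on the right---your subsequent weakening to $h\,j_S$ makes this harmless, but the intermediate inequality as you wrote it is not what the reference provides.
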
 
\begin{proof} 
Using Lemma 3.3 in \cite{BurHanLar13} and then adding and subtracting 
suitable terms and using the triangle inequality followed by a Poincar\'e 
inequality we obtain  
\begin{align}
\|v_S\|^2_{L^2(\mcN_{S,h})} 
&\lesssim h \| {v_S} \|^2_{L^2(\Gamma_h)} + h j_S({v_S},{v_S}) \nonumber
\\
&\lesssim h \| \nabla_{\Gamma_h} v_S \|^2_{L^2(\Gamma_h)} + j_S({h^{1/2}v_S},{h^{1/2}v_S})\nonumber
\\ \label{eq:poincareb}
&\lesssim \tn (v_B , h^{1/2} {v_S}) \tn_h^2
\end{align}
Note that the Poincar\'e inequality is applicable on $\Gamma_h$ since $\int_{\Gamma_h} v_S=0$.

Next using the control provided by the jump term 
$J_B(\cdot,\cdot)$ followed by a Poincar\'e inequality we obtain
\begin{align} \nonumber
\|v_B\|^2_{L^2(\mcN_{B,h})} 
\lesssim {} &\| v_B \|^2_{L^2(\Omega_h)} + h^3 j_B(v_B, v_B)
\\ \nonumber
\lesssim  {} & \| P_0 v_B \|^2_{L^2(\Omega_h)} + \| \nabla v_B \|^2_{L^2(\Omega_h)} 
+ h^3 J_B(v_B,v_B)
\\ \nonumber
\lesssim {} & \| P_0 v_B \|^2_{L^2(\Gamma_h)} + \tn (v_B, h^{1/2} v_S) \tn_h^2
\\ \nonumber
\lesssim {} & \| (I - P_0) v_B \|^2_{L^2(\Gamma_h)} + \| v_B \|^2_{L^2(\Gamma_h)} 
+ \tn (v_B, h^{1/2} v_S) \tn_h^2
\\ \nonumber
\lesssim {} & \| \nabla v_B \|^2_{L^2(\Omega_h)} 
+ b_B^{-2} \| b_B v_B - b_S h^{1/2} v_S\|_{L^2(\Gamma_h)}^2 
\\ \nonumber
& + b_B^{-2} b_S^2  \|h^{1/2} v_S\|^2_{L^2(\Gamma_h)} 
+ \tn (v_B, h^{1/2} v_S) \tn_h^2 
\\
  \label{eq:poincarea}
\lesssim {} &b_B^{-2} \| h^{1/2} v_S \|^2_{L^2(\Gamma_h)} 
+ \tn (v_B, h^{1/2} v_S) \tn_h^2
\end{align}
Here $P_0 v_B$ is the $L^2$-projection of $v_B$ onto constant functions on 
$\Omega_h$ and we added and subtracted suitable functions to control $P_0 v_B$ 
using the coupling term together with 
the control of $\|h^{1/2} v_S\|^2_{\Gamma_h}$ provided by (\ref{eq:poincareb}) 
and the fact that the constant $b_B>0$. Furthermore, the first inequality {in (\ref{eq:poincarea})}
is a consequence of the inverse inequality 
\begin{equation}\label{eq:pairjump}
\| v \|^2_{L^2(K_1)} \lesssim \| v \|^2_{L^2(K_2)} 
+ h^3 \|[n_F \cdot \nabla v ]\|^2_{L^2(F)}
\quad \forall v \in V_{B,h}
\end{equation}
that holds for each pair of elements $K_1$ and $K_2$ that share a face $F$. 
Iterating the inequality (\ref{eq:pairjump}) we may control the elements at 
the boundary in terms of the elements in the interior of $\Omega_h$ as follows
\begin{equation}\label{eq:chainjump}
\| v \|^2_{L^2(K_1)} \lesssim \| v \|^2_{L^2(K_N)} + \sum_{i=1}^{N-1} h^3 \|[n_F \cdot \nabla v ]\|^2_{L^2(F_i)}\quad \forall v \in V_{B,h}
\end{equation}
see \cite{MasLar14} for further details. Note that for sufficiently small 
mesh size the length $N$ of the shortest chain of elements that share an edge 
between an element that intersects the boundary and an interior element 
is uniformly bounded.

Combining the two estimates (\ref{eq:poincareb}) and (\ref{eq:poincarea}) 
the lemma follows directly.
\end{proof}
\begin{lem} (Inverse inequality)  {Independently of the mesh/boundary
  intersection} it holds that
\begin{equation}\label{inverse}
\tn (v_B,h^{1/2}{v_S}) \tn_h^2 \lesssim h^{-2} \| (v_B,{v_S}) \|^2_{h} 
\quad \forall (v_B,v_S) \in W_h
\end{equation}
\end{lem}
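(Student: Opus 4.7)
The plan is to expand the discrete energy norm into its five constituent contributions
\[
\tn(v_B,h^{1/2}v_S)\tn_h^2
= b_Bk_B\|\nabla v_B\|^2_{L^2(\Omega_h)}
+ b_Sk_S\, h\|\nabla_{\Gammah}v_S\|^2_{L^2(\Gammah)}
+ \|b_Bv_B - b_S h^{1/2}v_S\|^2_{L^2(\Gammah)}
\]
plus the two jump terms $\tau_B h^3 j_B(v_B,v_B)$ and $\tau_S\, h\, j_S(v_S,v_S)$, and to bound each one by $h^{-2}\|(v_B,v_S)\|_h^2$ using only standard element-wise inverse and cut-trace inequalities. The crucial tool is the trace inequality on a single cut element
\[
\|w\|^2_{L^2(\Gammah\cap K)} \lesssim h^{-1}\|w\|^2_{L^2(K)} + h\|\nabla w\|^2_{L^2(K)},
\qquad K \in \mcK_{S,h},
\]
which is valid for piecewise linear $w$ independently of how $\Gammah$ cuts $K$, and then reduces (since $\nabla w$ is piecewise constant and bounded by the standard inverse estimate $\|\nabla w\|_{L^2(K)}\lesssim h^{-1}\|w\|_{L^2(K)}$) to $\|w\|^2_{L^2(\Gammah\cap K)}\lesssim h^{-1}\|w\|^2_{L^2(K)}$.

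For the bulk gradient term I would apply the standard inverse inequality $\|\nabla v_B\|_{L^2(K)}\lesssim h^{-1}\|v_B\|_{L^2(K)}$ on every $K\in\mcK_{B,h}$ and sum over $\Omega_h\subset\mcN_{B,h}$. For the surface gradient term I would apply the cut-trace inequality above to the piecewise constant quantity $\nabla v_S$, which yields $\|\nabla_{\Gammah}v_S\|^2_{L^2(\Gammah)}\lesssim h^{-1}\|\nabla v_S\|^2_{L^2(\mcN_{S,h})}$, then apply the standard inverse inequality again to pick up another $h^{-2}$ and multiply by the prefactor $h$, yielding the required $h^{-2}\|v_S\|^2_{L^2(\mcN_{S,h})}$ bound. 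The coupling term splits by the triangle inequality into $\|v_B\|^2_{L^2(\Gammah)}$ and $h\|v_S\|^2_{L^2(\Gammah)}$, each of which is controlled by the cut-trace inequality applied to $v_B$ and $v_S$ respectively, giving $h^{-1}\|v_B\|^2_{L^2(\mcN_{B,h})}+\|v_S\|^2_{L^2(\mcN_{S,h})}\lesssim h^{-2}\|(v_B,v_S)\|_h^2$.

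Finally, for the jump stabilization I would use the standard face trace inequality $\|[n_F\cdot\nabla w]\|^2_{L^2(F)}\lesssim h^{-1}\|\nabla w\|^2_{L^2(K_1\cup K_2)}$ together with the element-wise inverse inequality for $\nabla w$ to obtain $h^3 j_B(v_B,v_B)\lesssim \|v_B\|^2_{L^2(\mcN_{B,h})}$ and $h\, j_S(v_S,v_S)\lesssim h^{-2}\|v_S\|^2_{L^2(\mcN_{S,h})}$. Summing the five pieces produces the claim.

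The main obstacle is the surface contribution: I have to produce an $h^{-2}$ bound for $h\|\nabla_{\Gammah}v_S\|^2_{L^2(\Gammah)}$ where the volume measure of $\Gammah\cap K$ can be arbitrarily small compared with $|K|$. Without the cut-element trace inequality there would be no way to absorb the unfavorable geometry; since that inequality is dimension-homogeneous and mesh/cut independent for piecewise linear functions, the rest of the argument is a routine bookkeeping of powers of $h$.
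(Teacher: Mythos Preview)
Your proposal is correct and follows essentially the same approach as the paper: decompose $\tn(v_B,h^{1/2}v_S)\tn_h^2$ into its constituent pieces and bound each by $h^{-2}\|(v_B,v_S)\|_h^2$ using the cut-element trace inequality together with standard element-wise inverse estimates. The only cosmetic difference is that the paper groups the five contributions into three pairs (bulk gradient with $j_B$, coupling term alone, surface gradient with $j_S$), whereas you treat them separately; the underlying inequalities and the bookkeeping of powers of $h$ are identical.
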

\begin{proof} Using standard estimates we obtain the following three estimates
\begin{align}
{b_B} k_B \| \nabla v_B \|_{L^2(\Omega_h)}^2 + \tau_B h^3 j_B(v_B,v_B) \lesssim 
h^{-2} \| v_B \|^2_{L^2(\mcN_{B,h})} \lesssim h^{-2} \| (v_B,v_S) \|_h^2
\end{align}
\begin{align}
\|{b_B} v_B - {b_S} h^{1/2} v_S \|^2_{L^2(\Gamma_h)} 
\lesssim h^{-1} {b_B} \|v_B \|^2_{L^2(\mcN_{S,h})} + {b_S} \|{v_S} \|^2_{L^2(\mcN_{S,h})} \lesssim h^{-2} \| (v_B,{v}_S) \|_h^2
\end{align}
\begin{align}\nonumber
{b_S} k_S h \| \nabla_{\Gamma_h} {v_S} \|_{L^2(\Gamma_h)}^2 
+ \tau_S h j_S({v_S},{v_S}) ) 
&\lesssim (b_S k_S + \tau_S ) \| \nabla {v_S} \|_{L^2(\mcN_{S,h})}^2
\lesssim h^{-2} \| {v_S} \|^2_{L^2(\mcN_{S,h})}
\end{align}
and thus the proof is complete.
\end{proof}

Finally, we are ready to prove our final estimate of the condition number.

\begin{thm} The following estimate of the condition number of the stiffness 
matrix holds {independently of the mesh/boundary
  intersection}
\begin{equation}
\kappa( \tilde{A} )\lesssim h^{-2}
\end{equation}
\end{thm}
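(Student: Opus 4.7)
The plan is to follow the Ern--Guermond \cite{ErnGer06} strategy adapted to the preconditioned form (\ref{eq:Atilde}): estimate $|\tilde A|_V$ and $|\tilde A^{-1}|_V$ separately by passing between the Euclidean norm on $V$, the mesh-dependent $L^2$-norm $\|\cdot\|_h$, and the discrete energy norm $\tn (\cdot_B, h^{1/2}\cdot_S)\tn_h$, using as the three main tools the norm equivalence (\ref{rneqv}) with $d=3$, the Poincar\'e inequality (\ref{eq:poincarecond}), and the inverse inequality (\ref{inverse}).

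First I would bound $|\tilde A|_V$ from above. For $x,y \in V$ let $v,w \in W_h$ be the corresponding finite element functions. Continuity of $\tilde A_h$ in the energy norm (which is immediate from the Cauchy--Schwarz inequality applied to each term of $\tilde A_h$) gives
\begin{equation*}
(\tilde A x, y)_N = \tilde A_h(v,w) \lesssim \tn (v_B, h^{1/2} v_S)\tn_h \,\tn (w_B, h^{1/2} w_S)\tn_h .
\end{equation*}
Applying the inverse inequality (\ref{inverse}) to each factor and then the right half of (\ref{rneqv}) yields
\begin{equation*}
(\tilde A x, y)_N \lesssim h^{-2} \|(v_B,v_S)\|_h \|(w_B,w_S)\|_h \lesssim h^{-2}\cdot h^{d} |x|_N |y|_N,
\end{equation*}
so that $|\tilde A|_V \lesssim h^{d-2}$.

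Next, I would bound $|\tilde A^{-1}|_V$. Given $x \in V$, set $y = \tilde A x$ and let $v$ correspond to $x$. Using that $\tn (v_B, h^{1/2} v_S)\tn_h^2 = \tilde A_h(v,v) = (\tilde A x, x)_N = (y,x)_N$, followed by the Poincar\'e inequality (\ref{eq:poincarecond}) and the left half of (\ref{rneqv}), I get
\begin{equation*}
|x|_N^2 \lesssim h^{-d}\|(v_B,v_S)\|_h^2 \lesssim h^{-d}\tn (v_B, h^{1/2} v_S)\tn_h^2 = h^{-d}(y,x)_N \leq h^{-d} |y|_N |x|_N,
\end{equation*}
so $|x|_N \lesssim h^{-d} |y|_N$, i.e.\ $|\tilde A^{-1}|_V \lesssim h^{-d}$. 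Multiplying the two bounds gives $\kappa(\tilde A)\lesssim h^{d-2}\cdot h^{-d} = h^{-2}$, as required.

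The main subtlety will be the correct handling of the kernel: since $\tilde A$ has the one-dimensional kernel of constant pairs $(c_B,c_S)$ with $b_B c_B - b_S c_S = 0$, both coercivity and the Poincar\'e inequality (\ref{eq:poincarecond}) are only valid on the invariant complement $V = \bbR^{N_B}\times \bbR^{N_S}/\langle 1_{\bbR^{N_S}}\rangle$, and this is precisely what is built into the definition of $|\tilde A|_V$ and $|\tilde A^{-1}|_V$. The other delicate point is ensuring the $h^{1/2}$ rescaling of the surface component is consistently tracked through both inequalities; this is exactly the purpose of the choice (\ref{eq:Atilde}), as it is the scaling that makes the Poincar\'e and inverse estimates sharp and thereby produces the optimal $h^{-2}$ factor typical of a fitted second order elliptic problem, independently of the position of $\Gamma$ relative to the background mesh (the key role played by the stabilization $j_h$ through (\ref{eq:pairjump})--(\ref{eq:chainjump})).
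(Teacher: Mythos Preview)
Your proposal is correct and follows essentially the same approach as the paper: the Ern--Guermond strategy of bounding $|\tilde A|_V$ via continuity plus the inverse inequality (\ref{inverse}) and the norm equivalence (\ref{rneqv}), and bounding $|\tilde A^{-1}|_V$ via coercivity plus the Poincar\'e inequality (\ref{eq:poincarecond}) and (\ref{rneqv}), yielding $|\tilde A|_V\lesssim h^{d-2}$ and $|\tilde A^{-1}|_V\lesssim h^{-d}$ and hence $\kappa(\tilde A)\lesssim h^{-2}$. The only cosmetic slip is that you have swapped which half of (\ref{rneqv}) is invoked in each step; the inequalities themselves are used in the correct direction.
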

\begin{proof} We need to estimate $| \tilde{A} |_V$ and $|\tilde{A}^{-1}|_V$. Starting with 
$| \tilde{A} |_V$ we have
\begin{align}\nonumber
|\tilde{A} \widehat{v}|_V &= \sup_{\widehat{w} \in \bbR^N } \frac{(\widehat{w},\tilde{A}\widehat{v})_N}{| \widehat{w} |_N}
\\ \nonumber
&= \sup_{w \in W_h}  
\frac{A_h((v_B,h^{1/2} v_S),(w_B,h^{1/2} w_S))}{\tn (w_B,h^{1/2}w_S) \tn_h} \frac{\tn (w_B,h^{1/2} w_S) \tn_h}{\| (w_B,w_S) \|_h }\frac{\| (w_B,w_S) \|_h}{| \widehat{w} |_N}
\\ \nonumber
&\lesssim h^{(d-2)/2} \tn(v_B,h^{1/2} v_S)\tn_h 
\\
&\lesssim h^{d-2}| \widehat{v} |_N
\end{align}
where at last we used the estimate
\begin{equation}
\tn (v_B, h^{1/2} v_S ) \tn_h \lesssim h^{-1} \|(v_B,v_S) \|_{h} \lesssim h^{(d-2)/2} |\widehat{v}|_N
\end{equation}
together with (\ref{inverse}) and (\ref{rneqv}). Thus
\begin{equation}\label{Aest}
|\tilde{A}|_V \lesssim h^{d-2}
\end{equation}

Next we turn to the estimate of $|\tilde{A}^{-1}|_V$.
Using (\ref{rneqv}) and (\ref{eq:poincarecond}), we get
\begin{align}
|\widehat{v}|^2_N 
\lesssim {} & h^{-d} \tn (v_B,h^{1/2} v_S) \tn_h^2
\lesssim h^{-d} A_h((v_B,h^{1/2}v_S),(v_B,h^{1/2}v_S)) \nonumber
\\
\lesssim {} & h^{-d} (\widehat{v}, \tilde{A} \widehat{v})_N 
\lesssim h^{-d} |\widehat{v}|_N |\tilde{A}\widehat{v}|_N
\end{align}
and thus we conclude that $|\widehat{v}|_N \leq C h^{-d}|\tilde{A}\widehat{v}|_N$. Setting $ \widehat{v} = \tilde{A}^{-1} \widehat{w}$ we obtain
\begin{equation}\label{Ainvest}
|\tilde{A}^{-1}|_N \lesssim h^{-d}
\end{equation}
Combining estimates (\ref{Aest}) and (\ref{Ainvest}) of $|\tilde{A}|_N$ and 
$|\tilde{A}^{-1}|_N$ the theorem follows.
\end{proof}

\section{Numerical results}
We consider an example where the domain $\Omega$ is the unit sphere, $k_B=k_S=1$, $b_B=b_S=1$, and $f_B$ and $f_S$ are choosen such that the exact solution is as in~\cite{EllRan12} given by
\begin{align}\label{eq:exactsol}
u_B&=e^{(-x(x-1)-y(y-1))} \nonumber \\
u_S&=(1+x(1-2x)+y(1-2y))e^{(-x(x-1)-y(y-1))}
\end{align}
We study the convergence rate of the numerical solution $u_h=(u_{B,h},u_{S,h})$ and the condition number of the system matrix using the proposed finite element method. A direct solver is used to solve the linear systems. The stabilization parameters $\tau_B=\tau_S=10^{-2}$. We use a structured mesh for $\Omega_0$ and the mesh parameter $h=h_x=h_y=h_z$. 

To represent the boundary $\Gamma$ we use the standard level set method. We define a piecewise linear approximation to the distance function on $\mcK_{0,h}$ and $\Gamma$ is approximated as the zero level set of this approximate distance function. Thus, $\Gamma_h$ is represented by linear segments on $\mcK_{0,h}$. The normal vectors are computed from the linear segments. 

The solution $u_{S,h}$ with $h=0.13125$ and the triangulation of $\Gamma_h$ are shown in Fig.~\ref{fig:sol}. The convergence of $u_h$ in both the $L^2$ norm and the $H^1$ norm are shown in Fig.~\ref{fig:conv}. We have as expected first order convergence in the 
$H^1$ norm and second order convergence in the $L^2$ norm. The spectral condition number of the matrix $\tilde{A}$ associated with the bilinear form $\tilde{A}_h(\cdot,\cdot)$ (see equation~\eqref{eq:Atilde}) is shown for different mesh sizes in Fig.~\ref{fig:cond}.
\begin{figure}
\begin{center} 
\includegraphics[width=0.6\textwidth]{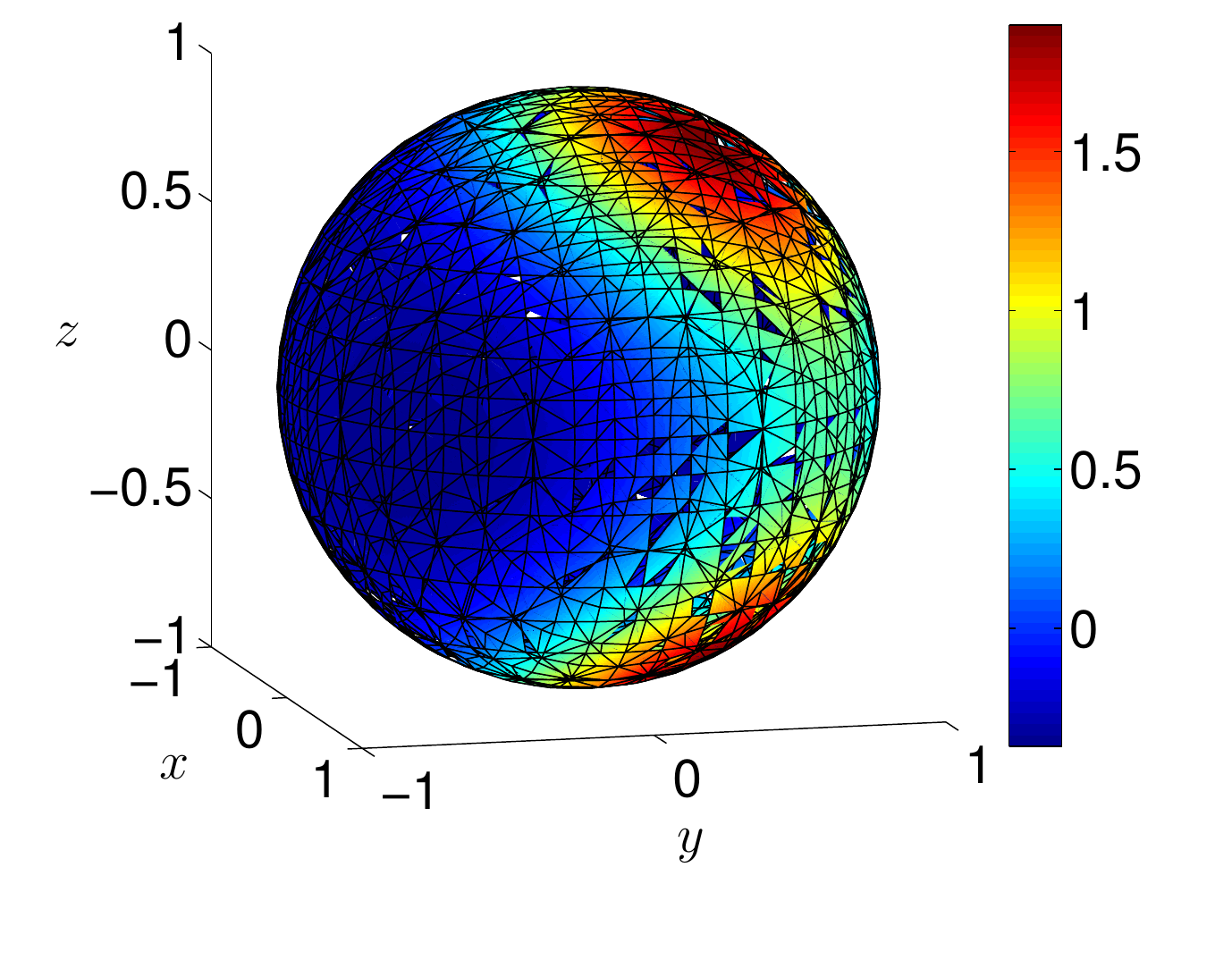}
\caption{The solution $u_{S,h}$ with $h=0.13125$. \label{fig:sol}}
\end{center}
\end{figure}

\begin{figure}
\begin{center} 
\includegraphics[width=0.5\textwidth]{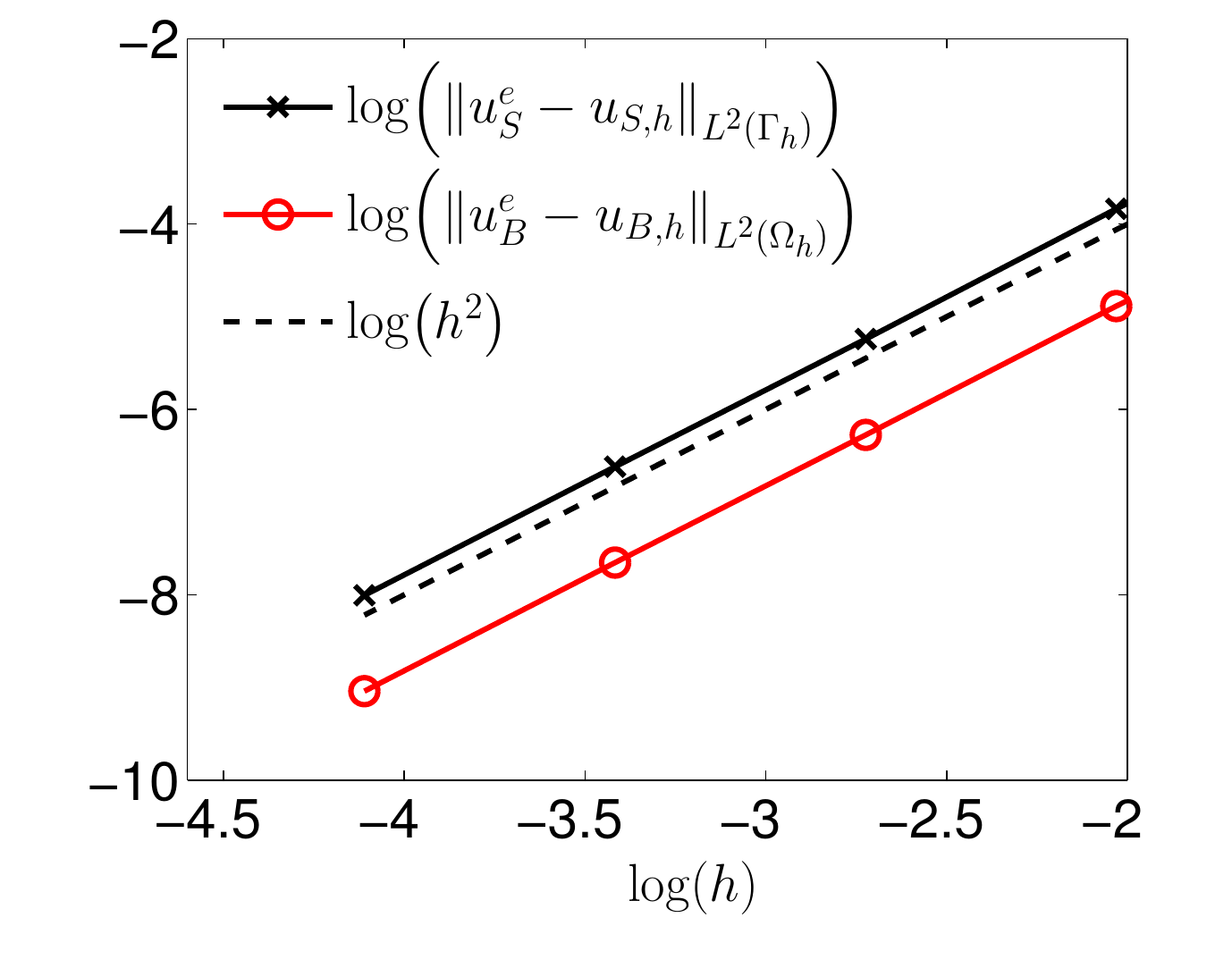}
\includegraphics[width=0.5\textwidth]{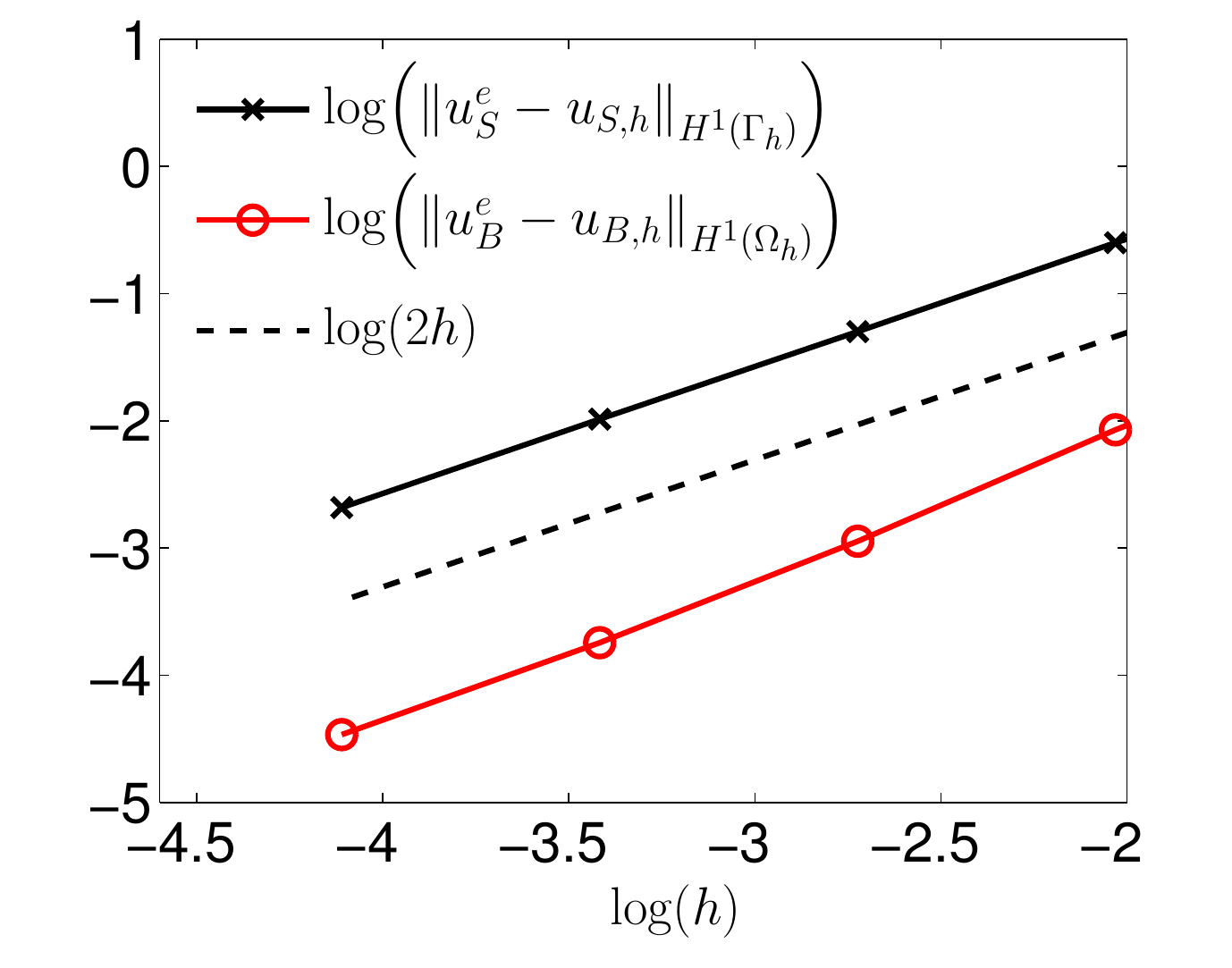}
\caption{Convergence of $u_B$ and $u_S$. Upper panel: The error measured in the $L^2$ norm versus mesh size. Lower panel: The error measured in the $H^1$ norm versus mesh size.\label{fig:conv}}
\end{center}
\end{figure}

\begin{figure}
\begin{center} 
\includegraphics[width=0.5\textwidth]{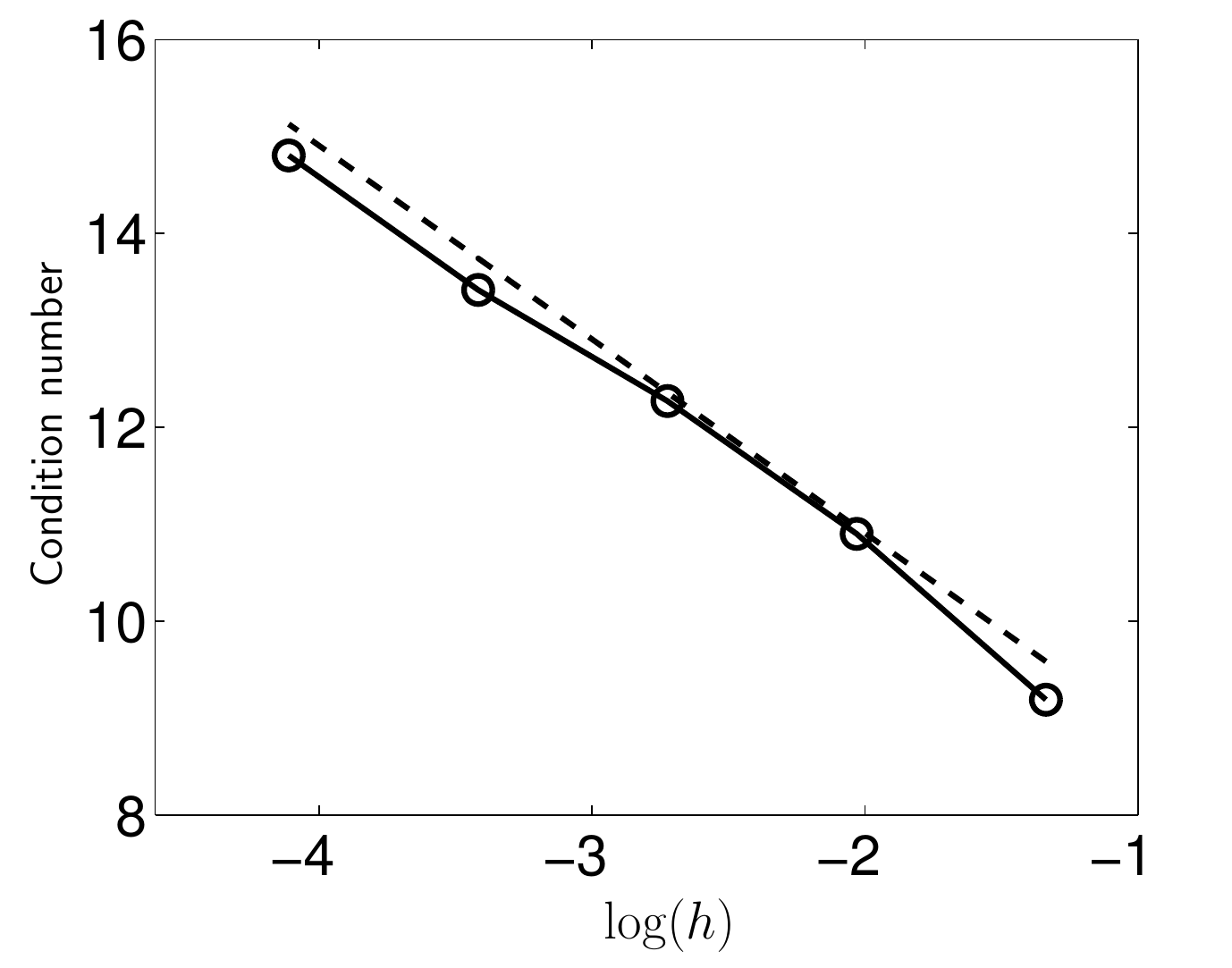}
\caption{The spectral condition number of the matrix $\tilde{A}$ versus mesh size. The dashed line is proportional to $h^{-2}$. \label{fig:cond}}
\end{center}
\end{figure}

\section*{Appendix}
Here we will give some details on the inequalities
\eqref{eq:gammahest}. First we recall that 
\begin{equation}\label{app:qh}
q_h(x) = x + \gamma_h(x)n(x) \quad x \in \Gamma
\end{equation}
Now using the defintion of the closest point mapping
\begin{equation}
y = p(y) +  \rho(y)n^e(y) \quad y \in \Gammah
\end{equation}
Setting $x = p(y)$ in (\ref{app:qh}) we have
\begin{equation}
y = p(y) + \gamma_h(p(y)) n^e(y)\quad y \in \Gammah
\end{equation}
and therefore, by uniqueness, 
$\rho(y) = \gamma_h(p(y)), \forall y \in \Gammah$. Thus we 
have $\gamma_h = \rho^L$ and we immediately obtain the first 
inequality in (\ref{eq:gammahest}) since
\begin{equation}
\| \gamma_h\|_{L^\infty(\Gamma)} = \|\rho^L\|_{L^\infty(\Gamma)}
= \|\rho\|_{L^\infty(\Gammah)}\lesssim h^2
\end{equation}
Next using (\ref{eq:LiftDerGamma}) we have the identity
\begin{equation}
\nabla_\Gamma \gamma_h = \nabla_\Gamma \rho^L 
= DF_{h,\Gamma}^T (\nabla_\Gammah \rho)^L 
= DF_{h,\Gamma}^T (P_\Gammah n^e)^L
\end{equation}  
Estimating the right hand side using (\ref{eq:DFhboundsgamma}) and 
(\ref{est:PGammahn}) we finally obtain  
\begin{equation}
\|\nabla_\Gamma \gamma_h \|_\Gamma 
\lesssim \|\nabla_\Gammah \rho \|_\Gammah 
\lesssim \| P_\Gammah n^e \|_\Gammah 
\lesssim h
\end{equation}
which is the second bound in (\ref{eq:gammahest}).

%

\bibliographystyle{elsarticle-num}

\end{document}